\pgfplotsset{compat=1.12}
\newcommand{\R}{\mathbb{R}}
\newcommand{\Pro}{\mathbb{P}}
\newcommand{\Z}{\mathbb{Z}}
\newcommand{\N}{\mathbb{N}}
\newcommand{\diffvol}{\text{Diff}_{\text{vol}}^\infty(M)}
\newcommand{\diff}{{\text{Diff}^\infty(M)}}
\newcommand{\Conf}{\bold{CS}(TM)}
\newcommand{\conf}[1][y]{\bold{CS}(T_{#1}M)}
\newcommand{\Grass}{\bold{Gr}^k(TM)}
\newcommand{\grass}[1][y]{\bold{Gr}^k(T_{#1}M)}
\newcommand{\ext}[1][x]{\bigwedge^kT_{#1}M}
\newcommand{\Ext}{\bigwedge^kTM}
\theoremstyle{plain}
\newtheorem{thm}{Theorem}[section] 
\newtheorem*{thm*}{Theorem} 
\newtheorem*{thma*}{Theorem A} 
\newtheorem*{thmb*}{Theorem B} 
\newtheorem{lem}[thm]{Lemma} 
\newtheorem{cor}[thm]{Corollary} 
\newtheorem*{cor*}{Corollary} 
\theoremstyle{definition}
\newtheorem{defn}[thm]{Definition}
\theoremstyle{remark}
\title{Uniformly Expanding Random Walks on Manifolds}
\author{Rosemary Elliott Smith}
\begin{document}

\maketitle

\section{Introduction}
A random walk on a smooth manifold $M$ is defined by a probability measure $\mu$ on $\diff$-- each step of the walk is determined by a random $f\in \text{supp}(\mu)$, and independent of the previous steps. In this paper we will construct random walks on smooth manifolds with a particular property which we will call \textit{uniform expansion}.
\begin{defn}
    We say a probability measure $\mu$ on $\diff$ is \textit{uniformly expanding} if there exists a $C>0, N\in \N$ such that for all $x\in M$, and $v\in T^1_xM$, we have\footnote{Here we have $\mu^{(N)}:=\mu\ast\mu\ast\cdots\ast\mu$ ($N$ times).}
	$$\int_{\diff} \log ||D_xf(v)|| d\mu^{(N)}(f)>C$$
\end{defn}
This property tells us the dynamical system is, on average, expanding everywhere. For a subset of uniformly expanding random walks, the work of Brown, Eskin, Filip, and Rodriguez Hertz (\textit{in prep}, \cite{eskinpaper}) provides a strong Ratner-like classification theorem that completely determines the type of orbits that can exist; they are either dense or finite. 

\begin{thmb*}[\cite{eskinpaper}]
	Let $\mu$ be uniformly expanding in all dimensions, $\text{supp}(\mu)\subseteq \diffvol$. If all invariant measures of $\mu$ have non-zero Lyapunov exponents, then the only invariant measures of $\mu$ are volume or finitely supported.
	\end{thmb*}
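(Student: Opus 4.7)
The plan is to follow the random-walk measure-rigidity program of Eskin--Mirzakhani and its adaptation to smooth random dynamics by Brown--Rodriguez Hertz, so that the core of the argument becomes a ``factorization/exponential drift'' statement for any $\mu$-stationary Borel probability measure $\nu$ on $M$. Since $\mu$ is uniformly expanding (in all dimensions) and all stationary measures have nonzero Lyapunov exponents, the Oseledets--Pesin theorem applied to the one-sided skew product gives a measurable splitting of $TM$ into stable and unstable subbundles $E^s \oplus E^u$ over $\nu$-a.e.\ $x \in M$, and a family of Pesin unstable manifolds $W^u(x)$ tangent to $E^u$. The first step is to reduce to this hyperbolic picture: verify that uniform expansion forces $E^u \neq 0$ on a set of positive $\nu$-measure, and by stationarity almost everywhere.

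Next I would establish an entropy/dimension relation for $\nu$ along unstables. Using the Ledrappier--Young formula in the random setting, the conditional measures $\nu^u_x$ of $\nu$ on $W^u(x)$ are either (i)~absolutely continuous with respect to Riemannian volume on $W^u(x)$ and $E^u$ is full, in which case $\nu$ is an $\mathrm{SRB}$ measure for the random walk with complete unstable dimension, or (ii)~have strictly smaller transverse dimension. The goal is to rule out case (ii) unless $\nu$ is atomic. In case (i), absolute continuity of unstable conditionals plus full unstable dimension, combined with the fact that $\mathrm{supp}(\mu) \subseteq \diffvol$, will force $\nu$ to coincide with the normalized Riemannian volume on a union of invariant components, and the support argument (connectedness of $M$ and density propagation by the support of $\mu$) upgrades this to the full volume measure.

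The hard step, and the entire technical core of \cite{eskinpaper}, is the exponential drift argument used to rule out case (ii). The idea is to pick two generic nearby points $x, x'$ on the same unstable leaf and track the difference under random compositions $f_n \circ \cdots \circ f_1$; the Oseledets theorem controls the growth of this difference to first order, but a Kakutani-type stopping-time construction extracts a well-defined limiting ``displacement'' direction in $T_{y}M$ for $\nu$-a.e.\ $y$. A Borel--Cantelli / recurrence argument then shows that $\nu$ is invariant under translation along this direction in the strong sense that the unstable conditionals $\nu^u_y$ are quasi-invariant under a non-trivial translation flow tangent to $E^u$. This is where I expect to spend essentially all the difficulty: the precise synchronization of stopping times so that the drift is non-degenerate, and the measurable selection that shows the resulting extra invariance is not merely trivial, are both delicate.

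Given such non-trivial translation invariance, a bootstrapping argument iterates: the set of directions along which $\nu^u_y$ is invariant is a measurable subbundle that is $Df$-equivariant for $\mu$-a.e.\ $f$, hence by uniform expansion must be either zero or all of $E^u$. In the latter case $\nu^u_y$ is Haar-like on $W^u(y)$, forcing absolute continuity on unstables and returning us to case (i). In the former case, the transverse dimension must already be zero, so $\nu^u_y$ is a point mass and $\nu$ is supported on a countable $\mu$-invariant set; stationarity and ergodic decomposition then make this set finite. Combining the two alternatives gives the dichotomy claimed in Theorem B.
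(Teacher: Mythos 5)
This statement is Theorem~B of \cite{eskinpaper}, which the present paper quotes as an external input from work in preparation by Brown, Eskin, Filip, and Rodriguez Hertz; the paper contains no proof of it, so there is nothing internal to compare your proposal against. What you have written is a plausible roadmap of the expected argument (the Eskin--Mirzakhani factorization/exponential drift scheme, transported to smooth random dynamics in the style of Brown--Rodriguez Hertz, combined with a random Ledrappier--Young dichotomy), but as a proof it is not acceptable: you explicitly defer ``the entire technical core'' --- the construction of synchronized stopping times, the non-degeneracy of the limiting drift, and the measurable selection showing the extra invariance of the unstable conditionals is non-trivial --- and none of these is carried out or even reduced to a citable statement. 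Since the theorem's content lives precisely in those steps, the proposal has a gap coextensive with the theorem itself.

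One step that is stated is also wrong as written. In your bootstrap you claim that the set of directions in $E^u$ along which $\nu^u_y$ is invariant forms a $Df$-equivariant subbundle which ``by uniform expansion must be either zero or all of $E^u$.'' Uniform expansion is a statement about average norm growth of decomposable $k$-vectors; it does not preclude proper equivariant subbundles (a random walk can expand uniformly in all dimensions while preserving a measurable splitting --- indeed, ruling out such invariant algebraic structures for the \emph{specific} measures constructed here is the whole point of Sections 4--6 of this paper, and requires the transitivity properties of the maps $g_x^a$, not uniform expansion alone). The standard way to close the bootstrap is to induct on the dimension of the group of translations preserving the conditionals, rerunning the drift argument transverse to the current invariance group, not to invoke an irreducibility that uniform expansion does not supply. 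Separately, note that Theorem~B concerns \emph{invariant} measures while your argument runs through stationary ones; that is harmless here only because Theorem~A identifies the two classes under the same hypotheses, and you should say so.
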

	This has the immediate corollary: 
\begin{cor*}
Under the assumptions of Theorem B, the orbit of any point $x\in M$ under $\langle \text{supp}(\mu)\rangle$ is either dense or finitely supported.   
\end{cor*}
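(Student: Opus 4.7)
The plan is to associate a $\mu$-stationary probability measure to the orbit closure $K := \overline{\Gamma \cdot x}$, where $\Gamma := \langle \text{supp}(\mu) \rangle$, and then apply Theorem B to produce the dichotomy. Assuming $M$ is compact, $K$ is a closed, $\Gamma$-invariant subset of $M$, and the corollary amounts to showing $K$ is either all of $M$ or finite.

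First, I would construct the stationary measure by a Krylov--Bogolyubov argument: form
\[
\nu_n := \frac{1}{n} \sum_{k=0}^{n-1} \mu^{(k)} \ast \delta_x,
\]
which is supported on the forward random-walk orbit of $x$, hence on $K$, and extract a weak-$\ast$ subsequential limit $\nu$. Then $\nu$ is $\mu$-stationary with $\text{supp}(\nu) \subseteq K$. Applying Theorem B, either $\nu$ is the volume measure --- in which case $M = \text{supp}(\nu) \subseteq K$, so $K = M$ and the orbit is dense --- or $\nu$ is finitely supported on some set $F \subseteq K$. In the latter case, the stationarity identity $\nu = \int f_\ast \nu \, d\mu(f)$, viewed as an equation of non-negative measures, forces $f(F) \subseteq F$ for $\mu$-almost every $f$, and then (by continuity in $f$ together with finiteness of $F$) for every $f \in \text{supp}(\mu)$; as $f$ is a diffeomorphism, $f$ in fact permutes $F$, so $F$ is $\Gamma$-invariant.

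It remains to show, in the finitely-supported case, that $x \in F$; once this is established, $\Gamma$-invariance of $F$ forces $\Gamma \cdot x \subseteq F$, and the orbit is finite. This is the step I expect to be the main obstacle. The plan is to exploit uniform expansion at each periodic point $p \in F$: the log-expansion hypothesis says the random walk stretches tangent vectors at $p$ at a positive average rate, so the dynamics near $F$ is expanding in a random-dynamical sense. From this one aims to derive an escape estimate of the form $\liminf_n \nu_n(U^c) > 0$ for every sufficiently small neighborhood $U$ of $F$ and every starting point $y \notin F$, which contradicts $\nu(U)=1$ and forces $x \in F$. The technical heart is making the escape estimate rigorous from the uniform expansion condition alone; a natural route is a local supermartingale argument using $y \mapsto -\log d(y, F)$ on a neighborhood of $F$, with positive drift supplied by the linearised expansion along the finite $\Gamma$-orbit $F$.
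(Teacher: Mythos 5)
The paper does not actually supply a proof of this corollary --- it is asserted as ``immediate'' from Theorem B --- so there is no argument of the author's to compare yours against line by line. Your skeleton is the standard one and is correct: Krylov--Bogolyubov on the orbit closure, Theorem A to upgrade the limiting stationary measure to an invariant one (you should cite this step explicitly, since Theorem B classifies \emph{invariant} measures and your $\nu$ is a priori only stationary), Theorem B to get the dichotomy, and then the observation that a finitely supported stationary measure is carried by a finite set permuted by every element of $\mathrm{supp}(\mu)$. Two small points of care: Theorem B does not assert ergodicity, so your limit $\nu$ could be a countable convex combination of volume and finitely supported ergodic components; you should either pass to an ergodic component or run your final argument against a single atomic orbit $F_0$ with $\nu(F_0)>0$. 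Also ``finitely supported orbit'' in the statement should be read as ``finite orbit.''

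You are right that the only genuinely nontrivial step is showing that the finitely supported case forces $x$ itself to lie in a finite invariant set, and your proposed mechanism --- a drift/supermartingale estimate for a Margulis-type function such as $u(y)=d(y,F)^{-\delta}$, with positive drift coming from uniform expansion at the points of $F$ --- is exactly how this is done in the literature (Benoist--Quint, Eskin--Mirzakhani--Mohammadi, and in this smooth setting Chung~\cite{Chung2020}). Concretely, uniform expansion plus compact support of $\mu$ gives, for small $\delta>0$, an inequality of the form $\int u(f(y))\,d\mu^{(N)}(f)\le c\,u(y)+b$ with $c<1$ for all $y\notin F$, whence $\frac{1}{n}\sum_{k<n}\int u\,d(\mu^{(k)}\ast\delta_x)$ stays bounded and the Ces\`aro averages cannot charge $F$ in the limit --- contradicting $\nu(F)>0$ unless $x\in F$. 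So your proposal is not wrong, but as written the decisive estimate is only sketched; to count as a proof you would need to carry out the second-order expansion of $\mathbb{E}\bigl[e^{-\delta\log(d(f(y),F)/d(y,F))}\bigr]$ that converts the log-expansion hypothesis into the multiplicative contraction of $u$, and to check that iterates can only approach $F$ through the neighborhood where the linearization is valid. In short: correct route, honestly flagged gap, and the gap is fillable by the standard Margulis-function argument rather than by anything in this paper.
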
	

	Further, these random walks exhibit unusual properties in other ways. For instance, all stationary measures of the random walk are actually invariant, and so any orbit supports an invariant measure, and all invariant measures are classified by Theorem B.
	
	\begin{thma*}[\cite{eskinpaper}]
	Let $\mu$ be uniformly expanding in all dimensions, $\text{supp}(\mu)\subseteq \diffvol$. Then any $\mu$-stationary measure on $M$ is $\mu$-invariant.  
	\end{thma*}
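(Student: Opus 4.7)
The plan is to reduce invariance to the degeneracy of an explicit measure-valued martingale and then use uniform expansion to force that degeneracy. Sample $\omega = (f_1, f_2, \ldots)$ from $\mu^{\otimes\N}$, let $\mathcal{F}_n$ denote the $\sigma$-algebra generated by $f_1,\ldots,f_n$, and define the random measure $M_n := (f_1\circ f_2 \circ \cdots \circ f_n)_*\nu$. Stationarity of $\nu$ gives
\[
\E[M_{n+1}\mid\mathcal{F}_n] = (f_1\circ\cdots\circ f_n)_* \int f_*\nu\, d\mu(f) = M_n,
\]
so $(M_n)$ is a bounded weak-$*$ martingale with $\E[M_n] = \nu$, converging $\mu^{\otimes\N}$-almost surely to some limit $M_\infty$. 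For each continuous $\phi$ the scalar martingale $\int \phi\, dM_n$ is uniformly bounded, hence converges in $L^2$; if $M_\infty = \nu$ almost surely, a standard variance identity gives $\int \phi\, dM_n = \int \phi\, d\nu$ almost surely for every $n$, and varying $\phi$ over a countable dense family produces $M_n = \nu$ a.s. Taking $n = 1$ yields $(f_1)_*\nu = \nu$ for $\mu$-a.e.\ $f_1$, which is $\mu$-invariance.

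The task thus reduces to proving $M_\infty = \nu$ almost surely, and this is where uniform expansion in all dimensions must do the work. The derivative of the composition $T_n := f_1\circ\cdots\circ f_n$ factors as a product of Jacobians along the orbit, so uniform expansion applied in each exterior power $\bigwedge^k$ yields strictly positive Lyapunov exponents of the associated cocycle in every codimension. Combined with volume preservation, this strong hyperbolicity should rule out any persistent random concentration of $M_n$: no foliation of positive codimension can carry extra randomness in the weak-$*$ limit. Formally, I would phrase this as triviality of the tail $\sigma$-algebra of $\mu^{\otimes\N}\times\nu$ under the skew product $S(\omega,x) = (\sigma\omega, \omega_1(x))$, where $\sigma$ is the shift; a trivial tail forces every bounded martingale limit to be constant, so $M_\infty = \nu$.

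The principal obstacle is this tail-triviality step. A natural attack combines a Margulis-type entropy identity for random dynamical systems with a Ledrappier--Young analysis, showing that the conditional measures of $\mu^{\otimes\N}\times\nu$ along unstable foliations of $S$ are determined in every dimension by $\nu$ and the expansion bounds. A more hands-on alternative, in the spirit of Furstenberg--Kifer, would lift $\nu$ to a stationary measure on each Grassmannian bundle $\bold{Gr}^k(TM)$, use uniform expansion to show this lift is unique, and reassemble the lifts to rigidify $\nu$. Either route requires controlling the Lyapunov spectrum uniformly in basepoint and direction, which is precisely where the ``all dimensions'' strength of uniform expansion is essential.
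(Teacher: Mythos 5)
The paper states Theorem A without proof --- it is quoted from the forthcoming work \cite{eskinpaper} --- so there is no in-paper argument to compare against, and your proposal must stand on its own. Its first paragraph is correct and standard: $M_n=(f_1\circ\cdots\circ f_n)_*\nu$ is a bounded measure-valued martingale, it converges $\mu^{\N}$-a.s.\ to a limit $M_\infty$ with expectation $\nu$, and degeneracy $M_\infty\equiv\nu$ would force $f_*\nu=\nu$ for $\mu$-a.e.\ $f$ via the variance identity. But this reduction is the easy, well-known part; the assertion $M_\infty=\nu$ a.s.\ (equivalently, the tail-triviality you invoke) \emph{is} the theorem, and your second and third paragraphs only name candidate strategies (``should rule out,'' ``I would phrase this as,'' ``a natural attack'') without executing any of them. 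Reducing the statement to tail-triviality of the skew product and then asserting tail-triviality is circular.

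Moreover, the heuristic offered --- that positive Lyapunov exponents in all exterior powers ``rule out persistent random concentration'' of $M_n$ --- is not a mechanism for degeneracy of the martingale and, taken at face value, points the wrong way. In the classical Furstenberg theory of random matrix products, positive (even simple) Lyapunov spectrum coexists with a martingale limit that is a \emph{random Dirac mass}, i.e.\ maximally non-degenerate, and the stationary measure on projective space is emphatically not invariant. What makes the present theorem true is the conjunction of uniform expansion with volume preservation (which pins the sum of exponents at zero and hence forces genuine hyperbolicity with exponents of both signs), fed through the exponential-drift/factorization machinery of Eskin--Mirzakhani and Brown--Rodriguez Hertz, Ledrappier--Young-type entropy identities for random dynamics, and the Ledrappier/Avila--Viana invariance principle applied along stable directions of the two-sided extension. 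None of this is carried out, and volume preservation --- which is essential and appears in the hypothesis --- plays no actual role in your argument. As written, the proposal is a correct framing of what must be proved together with a list of tools one might try, not a proof.
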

	
 Where classification theorems such as these exist, so do questions of existence-- how common are random walks that meet the criteria above, and in what settings can they exist? Here, we demonstrate the abundance of uniformly expanding random walks in smooth dynamical settings of any dimension. This abundance is perhaps surprising; if $\mu=\delta_{f_0}$ for $f_0\in\diffvol$, the random walk with law given by $\mu$ is never uniformly expanding. Indeed, in systems with great flexibility for movement, it is not immediately obvious why there would exist a surplus of random walks with extreme rigidity, but they are nonetheless present. 
 
 Potrie \cite{Potrie2021} showed that given any open set $U$ of $\text{Diff}_{\text{vol}}^\infty(\mathbb{T}^2)$, there exists an uniformly expanding random walk $\mu$ supported on a finite subset of $U$. In this paper we extend those results to closed manifolds of any dimension building on the work of Potrie \cite{Potrie2021} and Chung \cite{Chung2020}. Additionally, we use a stronger definition of uniformly expanding. This extension requires characterizing the growth of subspaces of $TM$ rather than single vectors-- a natural generalization in the higher dimensional cases. 
 
 	\begin{defn}\label{def: uniformly expan}
	Fix $k\in \{1,\cdots,d-1\}$. We say $\mu$ is \textit{uniformly expanding in dimension $k$} if there exists a $C>0, N\in \N$ such that for all $x\in M$, and $v\in \Pro(\ext[x])$ decomposable, we have
	$$\int_{\diff} \log ||D_xf(v)|| d\mu^{(N)}(f)>C$$
	\end{defn}
	This is an analogous definition to the one in \cite{Chung2020}. If $\mu$ is uniformly expanding in dimension $k$ for all $k\in \{1,\cdots, d-1\}$, we will call $\mu$ \textit{uniformly expanding in all dimensions}. Note that uniform expansion is a finite condition (see Lemma \ref{lem: finite support}).

	\begin{thm}\label{thm: big boi}
	Let $M$ be a closed smooth Riemannian manifold of dimension $d$, equipped with a volume form $\omega$. For any open $U\subset \diffvol$, there is a finitely supported measure $\mu$ on $\diffvol$ such that $\text{supp} (\mu)\subset U$, and $\mu$ is uniformly expanding in all dimensions in the sense of Definition $\ref{def: uniformly expan}$.
	\end{thm}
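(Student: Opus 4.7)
The plan is to build $\mu$ in two stages: first, for each fixed $k\in\{1,\ldots,d-1\}$, produce a finitely supported measure $\mu_k$ on $\diffvol$ with $\text{supp}(\mu_k)\subset U$ that is uniformly expanding in dimension $k$; second, combine the $\mu_k$ into a single measure $\mu$ uniformly expanding in all dimensions. For the first stage, the key observation is that uniform expansion in dimension $k$ is an \emph{open} condition on $(x,V)\in \Grass$ for any fixed finitely supported $\mu$, since $(x,V)\mapsto \int \log \|D_xf(V)\|\, d\mu(f)$ is continuous on the compact bundle $\Grass$. Thus it suffices to construct, at every $(x_0,V_0)\in \Grass$, a local measure $\mu_{x_0,V_0}$ supported in $U$ for which the integral is strictly positive at $(x_0,V_0)$, and then glue via a finite subcover.

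The local construction at $(x_0,V_0)$ would proceed by fixing some $f_0\in U$ and perturbing it inside a chart around $x_0$ by small compactly supported volume-preserving bump diffeomorphisms; since $U$ is open, these perturbations remain in $U$ while allowing $D_{x_0}f$ to be prescribed as an arbitrary element in a neighborhood of $D_{x_0}f_0$ in $SL(d,\R)$. The problem then reduces to an algebraic fact: for a generically chosen $A\in SL(d,\R)$, the equally weighted pair $\{A,A^{-1}\}$ satisfies
$$\tfrac12\bigl(\log \|(\textstyle\bigwedge^k A)\,v_0\|+\log\|(\textstyle\bigwedge^k A^{-1}) v_0\|\bigr)>0,$$
where $v_0\in \ext[x_0]$ is a unit decomposable representative of $V_0$. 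An SVD computation shows this holds whenever $V_0$ is not an invariant subspace of $A$, a condition we avoid by generic choice. Continuity then extends the positivity to an open neighborhood of $(x_0,V_0)$ in $\Grass$.

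To globalize for fixed $k$, cover the compact bundle $\Grass$ by finitely many such expanding neighborhoods with associated local measures $\{\nu_j\}_{j=1}^L$. A direct convex combination $\tfrac{1}{L}\sum_j \nu_j$ may fail to be uniformly expanding because, at a given $(x,V)$, the contributions from the non-expanding $\nu_j$ could dominate the one expanding term. The remedy is to pass to a large convolution power and use the cocycle identity $\log\|D_x(f_N\circ\cdots\circ f_1)(v)\|=\sum_{i=1}^N \log\|D_{x_{i-1}}f_i(v_{i-1})\|$ along the lifted orbit $(x_i,v_i)$ in $\Grass$: integrating against the $N$-fold convolution expresses the left side as a time-$N$ average over random trajectories, and for sufficiently large $N$ every trajectory visits every expanding region with definite probability, forcing a uniform positive lower bound.

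For the second stage, I would form $\mu=\mu_1*\cdots*\mu_{d-1}$ and iterate further. The main obstacle, which I expect to be the technical heart of the argument, is that volume preservation couples expansion in dimension $k$ to contraction in dimension $d-k$, so convolving with $\mu_{k'}$ for $k'\neq k$ can inject negative contributions into the $k$-exponent. Controlling these cross-dimensional error terms and showing that they are dominated by the linear-in-$N$ positive expansion supplied by $\mu_k$ will require a careful quantitative accounting, relying on the uniform bound on $\|Df\|$ over the finite support of each $\mu_k$ (compare Lemma \ref{lem: finite support}) and choosing $N$ large enough to absorb the bounded error into the linear gain.
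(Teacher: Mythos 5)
The local construction and the use of compactness to reduce to finitely many expanding neighborhoods are reasonable, but the globalization step for a fixed $k$ --- which is the actual heart of the problem --- does not work as you describe, and this is a genuine gap. With $\mu=\tfrac1L\sum_j\nu_j$, at a point $(x,v)\in B_{j_0}$ the single good term contributes $c/L$ to $\int\log\|D_xf(v)\|\,d\mu(f)$ while each of the other $L-1$ terms can contribute a negative amount bounded below only by $-\sup_f\log\|D f^{-1}\|$, so the one-step expectation can be negative on a nonempty open subset of $\Grass$. Passing to $\mu^{(N)}$ does not repair this: by the cocycle identity, $\int\log\|D_xf(v)\|\,d\mu^{(N)}(f)$ is the sum of the one-step expectations along the lifted random walk on $\Grass$, and if that walk admits a stationary measure charging the region where the one-step expectation is negative, the Birkhoff sums stay non-positive for every $N$. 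Your claim that ``every trajectory visits every expanding region with definite probability, forcing a uniform positive lower bound'' is exactly the statement that needs proof, and it is false in general: visiting expanding regions does not help if the walk spends a positive fraction of its time where the expected increment is negative. (Note also that your building block $\{A,A^{-1}\}$ gives expectation exactly $0$, not $>0$, on the invariant subspaces of $A$, so positivity genuinely degenerates somewhere for each local measure.)

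The paper confronts precisely this obstruction by arguing contrapositively rather than by direct summation: failure of uniform expansion yields an ergodic stationary measure on $\Grass$ with non-positive top exponent (Theorem \ref{thm: Brian's}), hence, via the Ledrappier/Avila--Viana invariance principle, a non-random $\mu$-invariant algebraic or conformal structure (Theorem \ref{thm: led}); the measure is then built so that the translation part of the perturbations forces every stationary measure to have an absolutely continuous component (Theorem \ref{thm: abs cont}), which is incompatible with such invariant structures (Theorems \ref{thm: no abscont1} and \ref{thm: no abscont2}). None of this machinery appears in your proposal, and without it (or a substitute) the ``large $N$'' step is unjustified. Two further issues: the convolution $\mu_1*\cdots*\mu_{d-1}$ is supported on $(d-1)$-fold compositions of elements of $U$, which in general do not lie in $U$, so the conclusion $\text{supp}(\mu)\subset U$ fails for that combination; the paper avoids the cross-dimensional problem entirely by using one measure whose perturbations act transitively on every Grassmannian simultaneously, so that the contradiction argument runs for all $k$ at once. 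The one place where your outline and the paper genuinely agree is the final discretization: uniform expansion is an open condition (Lemma \ref{lem: finite support}), so a finitely supported approximation inherits it.
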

	
	Using Theorem B of \cite{eskinpaper}, the following corollary is immediate: 
	\begin{cor}
	For $\mu$ as in Theorem \ref{thm: big boi}, the $\mu$-stationary measures are invariant and are either volume or finitely supported.
	\end{cor}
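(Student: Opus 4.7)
The plan is to invoke Theorems A and B of \cite{eskinpaper} essentially as black boxes, applied to the measure $\mu$ produced by Theorem \ref{thm: big boi}. By that construction $\mu$ is finitely supported on the prescribed open set $U \subseteq \diffvol$ and is uniformly expanding in all dimensions in the sense of Definition \ref{def: uniformly expan}, so the standing hypotheses of both Theorem A and Theorem B are satisfied.

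First I would apply Theorem A directly: every $\mu$-stationary probability measure on $M$ is in fact $\mu$-invariant, which delivers the first assertion with no additional work. To deduce the classification via Theorem B, the only remaining point is to verify its side hypothesis that every $\mu$-invariant measure has non-zero Lyapunov exponents. For this I would iterate the defining inequality of Definition \ref{def: uniformly expan} over blocks of $N$ steps and average against an arbitrary ergodic $\mu$-invariant $\nu$; via a Kingman--Oseledets type argument this promotes the pointwise single-step estimate to a bound of the form $\lambda_1(\nu) + \cdots + \lambda_k(\nu) \geq C_k/N_k > 0$ for every $k \in \{1,\ldots,d-1\}$. Combined with the volume-preserving trace relation $\sum_{i=1}^d \lambda_i(\nu) = 0$, this excludes an identically zero Lyapunov spectrum and in particular forces $\lambda_1(\nu) > 0 > \lambda_d(\nu)$, so the hypothesis of Theorem B is satisfied.

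Theorem B then classifies every $\mu$-invariant measure (and hence, by Theorem A, every $\mu$-stationary measure) as either the volume measure $\omega$ or finitely supported, which is both halves of the statement. The main obstacle, insofar as there is one, is the Lyapunov bookkeeping in the middle step: translating the $N$-step pointwise expansion inequality of Definition \ref{def: uniformly expan} into an asymptotic Lyapunov lower bound that is valid against every invariant $\nu$. Once that verification is in place, Theorems A and B combine formally to deliver both assertions.
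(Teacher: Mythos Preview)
Your proposal is correct and matches the paper's approach: the paper simply declares the corollary ``immediate'' from Theorem B of \cite{eskinpaper} (together with Theorem A for the invariance claim), without writing out any further argument. You are just being more explicit than the paper in checking Theorem B's Lyapunov hypothesis---your block-iteration of the uniform expansion inequality to get $\lambda_1(\nu)+\cdots+\lambda_k(\nu)\ge C_k/N_k>0$ is exactly the standard verification the paper leaves implicit (and is consistent with the paper's own remark, just after the corollary, that uniform expansion forces positive top exponent via Theorem~2.2 of \cite{Chung2020}).
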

	
	Another consequence of uniform expansion is positivity of top Lyapunov exponent. Using Theorem 2.2 of \cite{Chung2020} (extended to manifolds of any dimension), we obtain: 
	\begin{cor}
	For any open $U\subset \diffvol$, there is a finitely supported measure $\mu$ on $\diffvol$ such that $\text{supp} (\mu)\subset U$, and the top Lyapunov exponent of $\mu$ with respect to any stationary measure is positive.
	\end{cor}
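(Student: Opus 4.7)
My plan is to invoke Theorem \ref{thm: big boi} directly and then apply a higher-dimensional extension of Theorem 2.2 of \cite{Chung2020}. Theorem \ref{thm: big boi} produces a finitely supported $\mu$ with $\text{supp}(\mu)\subset U$ that is uniformly expanding in all dimensions; in particular, $\mu$ is uniformly expanding in dimension $1$, so there exist $C>0$ and $N\in\N$ with
\[
\int \log \|D_x f(v)\|\, d\mu^{(N)}(f) > C
\]
for every $x\in M$ and every unit $v\in T_x M$. It then suffices to show that uniform expansion in dimension $1$ forces positivity of the top Lyapunov exponent with respect to any $\mu$-stationary measure $\nu$ on $M$.

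For this, I will lift the walk to the projective tangent bundle $\Pro(TM)$, on which each $f\in\diffvol$ acts by $(x,[v])\mapsto (f(x), [D_x f(v)])$. Since $\Pro(TM)$ is compact and the induced action is continuous, a Krylov--Bogolyubov argument applied to Ces\`aro averages of $\mu^{(n)}$-pushforwards produces at least one $\mu$-stationary measure $\hat\nu$ on $\Pro(TM)$ projecting to $\nu$. Setting
\[
\Lambda := \int \log \|D_x f(v)\|\, d\mu(f)\, d\hat\nu(x,[v]),
\]
the additive cocycle identity $\log\|D_x(g\circ f)(v)\| = \log\|D_{f(x)}g(D_x f(v))\| + \log\|D_x f(v)\|$, combined with $\hat\nu$-stationarity, telescopes to
\[
\int \log \|D_x f(v)\|\, d\mu^{(N)}(f)\, d\hat\nu(x,[v]) = N\Lambda.
\]
The uniform expansion hypothesis, applied pointwise under the outer integral, then forces $N\Lambda > C$, so $\Lambda > C/N > 0$.

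The final step will be to identify $\Lambda$ as a lower bound for the top Lyapunov exponent $\lambda_1(\nu)$. Since $\text{supp}(\mu)$ is finite and $M$ is compact, $\log\|Df\|$ and $\log\|Df^{-1}\|$ are bounded, so the Oseledets multiplicative ergodic theorem for i.i.d.\ random $C^\infty$ diffeomorphism compositions applies; for $\hat\nu$-a.e.\ $(x,[v])$ and $\mu^\N$-a.e.\ trajectory, $\tfrac{1}{n}\log\|D_x f(v)\|$ (with $f$ the $n$-step composition) converges almost surely to one of the Lyapunov exponents associated with the ergodic component of $\nu$ carrying $x$. Integrating and applying dominated convergence identifies $\Lambda$ with a weighted average of such exponents, each bounded above by the top exponent of the ergodic component in question; since $\Lambda > 0$, the top Lyapunov exponent of $\nu$ is positive.

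The main obstacle I anticipate lies in the Oseledets-level bookkeeping when $\nu$ or $\hat\nu$ fails to be ergodic: one must pass to ergodic components, confirm the uniform expansion bound survives on each (it does, because the pointwise bound holds against every stationary lift), and verify that positivity of $\Lambda_\alpha$ on almost every component transfers to $\lambda_1(\nu) > 0$ under whatever convention is used for non-ergodic stationary measures. Existence of the stationary lift $\hat\nu$, the cocycle telescoping, and the Oseledets input are all standard once the framework is in place; the uniform expansion hypothesis does the real work through its strict pointwise lower bound, which is exactly what is lacking in the classical product-of-random-matrices setting where positivity of the top exponent is typically a delicate open problem.
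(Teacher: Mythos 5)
Your proposal is correct and follows essentially the same route as the paper: invoke Theorem \ref{thm: big boi} to get a finitely supported uniformly expanding $\mu$ in $U$, then deduce positivity of the top exponent for every stationary measure. The implication you prove in detail (uniform expansion in dimension $1$ implies $\lambda_1(\nu)>0$, via a stationary lift to the projective bundle, the telescoped cocycle average, and the Furstenberg--Oseledets comparison $\Lambda\le\lambda_1$) is exactly the content of Theorem 2.2 of \cite{Chung2020}, which the paper simply cites; your sketch correctly fills in that black box, including the pointwise nature of the uniform expansion bound that handles non-ergodic $\nu$ componentwise.
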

 In an upcoming work, we hope to show all exponents are non-zero.

	\section{Setup and Rationale}
The structure of the paper is as follows: in Section 2, we recall some classical results, and formulate a criterion for detecting measures that are not uniformly expanding. In Section 3, we will use an invariance principle to sharpen this criterion and describe it in terms of non-random algebraic structures. In Sections 4 and 5, we will lay out a concrete construction of a uniformly expanding measure. Finally, in Section 6 we will use the tools of Section 3 to conclude.

For the remainder of the paper, let $M$ be a closed smooth manifold of dimension $d$ with Riemannian volume form $\omega$. The measure induced by $\omega$ will be denoted $vol_M$. On $M$, we denote by $\diff$ the set of smooth diffeomorphisms of $M$. Inside $\diff$ sits $\diffvol$, the set of smooth volume preserving diffeomorphisms on $M$. We let $\mu$ be a probability measure on $\diffvol$ with bounded support, and consider the random walk on $M$ defined by the law $\mu$. For the time being, this is an unspecified measure-- a more concrete construction will be provided later, and we will also denote this measure $\mu$. 
	
	One of our primary objects of study will be the stationary measures of $\mu$:
	
	\begin{defn}
	Let $\nu$ be a Borel probability measure on $M$. We call $\nu$ \textit{$\mu$-stationary} if $\nu =\mu\ast\nu$ in the sense of convolution of measures. A stationary measure $\nu$ is ergodic if given any $\nu$ measurable set $A\subset M$ such that $\nu(f(A)\Delta A)=0$ for $\mu$-a.e. $f$, then $\nu(A)=0$. \footnote{Here, $\Delta$ is set difference.}. 
	\end{defn} 
	
	$\mu$-stationary measures can be thought of as invariant on $\mu$-average-- they are a weaker analogue of invariant measures. Such measures often inform an understanding of the random walk defined by $\mu$, characterizing various properties. In particular, on the $\overline{\text{supp}(\mu)}$-orbit of any point $x\in M$, we can construct a $\mu$-stationary measure, and so understanding stationary measures furthers our understanding of orbits closures as well.   
	
	It is worth noting that $\mu$-stationary measures can also be characterized as follows: let $\sigma:\diff^\N\to\diff^\N$ be the left shift map, i.e. $\sigma(\alpha)_i=f_{i+1}$ for $i \in \N$, $\alpha=(f_0, f_1, \cdots)\in \diff^\N$\footnote{The $i$th coordinate of $\alpha\in \diff^\N$ is denoted by $\alpha_i$}. Then the $\mu$-stationary measures on $M$ are precisely the measures $\nu$ such that $\mu^\N\times \nu$ is invariant under the skew product map $$T\colon\diff^\N\times M\to \diff^\N\times M \text{ where } T(\alpha, x)=(\sigma(\alpha), f_0(x))$$ See \cite{bqbook} for details. This is a non-random dynamical system that characterizes $\mu$, $\nu$, and the relationship between them. We will study it often in the remainder of the paper.
	
	Finally, we set out the relevant spaces for our dynamics. In addition to the tangent bundle $TM$, here we will also study the change of $k$-dimensional subspaces of the tangent bundle for $k\in \{1,\cdots, d-1\}$. Thus, we extend the action of our dynamics to the $k$th Grassmanian of $T_xM$, denoted $\grass[x]$. Under the Pl\"ucker Embedding, any element of $\grass[x]$ can be viewed as a decomposable element of $\Pro(\ext)$, the projectivized $k$th exterior power of $TM$. Write $v$ for both an element of $\Pro(\Ext)$ and for an element of $\Ext$ that represents it. Here, decomposable means that it can be written as a single wedge product-- such elements span $\ext$, and precisely make up the image of $\grass[x]$ under the Pl\"ucker Embedding. It is often be useful to move between $\Grass$ and its image in $\Pro(\Ext)$, and we will use them interchangeably. We denote the full Grassmannian bundle over $M$ by $\Grass$-- at each point $x\in M$, the fiber is given by $\grass[x]$. Similarly, the bundle of conformal structures of $TM$ over $M$ will be denoted by $\Conf$, and at each point $x\in M$ the fiber is denoted by $\conf[x]$, the conformal structures on $T_xM$. As $M$ has a given Riemannian metric denoted by $\langle \cdot,\cdot\rangle$, each fiber $\conf[x]$ can be identified with the orbits of $\mathcal{S}_d$ under multiplication by $\mathcal{C}_d$, where $\mathcal{S}_d$ is the set of $d\times d$ symmetric positive definite matrices and $\mathcal{C}_d$ is the set of $d\times d$ conformal matrices, that is $$\mathcal{C}_d=\{A\in \text{GL}_d(\R)\ |\ A^\top A=c^2 I_d,\ \text{for some } c>0 \}$$ 

	Note that the standard norm of a decomposable vector $v=v_1\wedge\cdots\wedge v_k$ in $\ext$ is given by the determinant of the Gram matrix, $[\langle v_i,v_j\rangle]_{ij}$. This is the $k$-volume of the parallelotope spanned by $v_1,\cdots, v_k$; studying its change is an intuitive measurement of expansion or contraction of our dynamical system. In this setting, given $f\in \diff$ and $x\in M$, we extend the action of $D_xf\in \text{Hom}(T_xM, T_{f(x)}M)$ to $D_xf\in\text{Hom}(\ext[x], \ext[f(x)])$ by $$D_xf(v_1\wedge\cdots \wedge v_k)=D_xfv_1\wedge\cdots\wedge D_xfv_k$$ 
	
	Let $k\in \{1,\cdots, d-1\}$, $\nu$ be an ergodic $\mu$-stationary measure on $M$. Define $d_k=\binom{d}{k}$ for all $k$. Given an $f\in \diff$, we define $F_f^k:\Ext\to \Ext$ by $F_f^k(x,v)=(f(x), D_xf(v))$. We will suppress the $k$ when it is clear what exterior power we are considering. Consider the cocycle $$A_k\colon\diff^\N\times \Ext\to \diff^N\times \Ext$$ $$A_k(\alpha, (x,v))=(\sigma(\alpha), F_{f_0}(x,v))$$
    By Oseledets' Theorem \cite{Oseledets1968}, for $\nu$-a.e. $x\in M$ and $\mu^\N$-a.e. $\alpha\in \diff^\N$, there is a flag of subspaces $\Pro(\ext[x])=V_1(x,k)\supset\cdots \supset V_{d_k}(x,k)$ and a set of corresponding $\lambda_1(\nu, k)\ge\cdots \ge \lambda_{d_k}(\nu, k)$ such that for any $v\in V_i(x,k)\backslash V_{i+1}(x,k)$, $$\lim_{n\to\infty} \frac{1}{n}\log ||D_x f^n_\alpha v|| = \lambda_i(\nu, k)$$
	Here $f^n_\alpha=f_{n-1}\circ\cdots\circ f_0$. We will call $\lambda_i(\nu,k)$ the $i$th Lyapunov exponent of $A_k$ with respect to $\nu$, or simply the Lyapunov exponent of $\nu$ when the cocycle is clear.

	We say two cocycles $B,A_k\colon\diff^\N\times \Ext\to \diff^N\times \Ext$ are Lyapunov cohomologous by a transfer cocycle $C:\diff^\N\times \Ext\to \diff^N\times \Ext$ if $$B_{(\alpha,x)}=C_{T(\alpha,x)}^{-1}(A_k)_{(\alpha, x)} C_{(\alpha, x)}$$ where $B_{(\alpha,x)}\colon\ext[x])\to \ext[f_0(x)]$ is the associated map between fibers of the cocycle $B$. Note that this conjugation does not lose the information given by Oseledets' Theorem, as we may choose $C$ so that the Lyapunov exponents of $B$ and $A_k$ agree (see \cite{ACO1999} Proposition 8.2). 

	 Noting that part of Proposition 3.17 of \cite{Chung2020} can easily be extended to manifolds of any dimension, we can formulate a criterion for verifying that $\mu$ is uniformly expanding in all dimensions in terms of $\mu$-stationary measures on $\Grass$. We state the extended theorem and adjusted proof here.

	\begin{thm}\label{thm: Brian's}
If the measure $\mu$ is not uniformly expanding in all dimensions, then for some $k\in \{1,\cdots, d-1\}$, there is an ergodic $\mu$-stationary measure $\eta$ on $\Grass$ that has non-positive top Lyapunov exponent on $\Pro(\Ext)$. 
	\end{thm}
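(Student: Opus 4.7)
The plan is to prove the contrapositive: supposing $\mu$ is \emph{not} uniformly expanding in some fixed $k\in\{1,\ldots,d-1\}$, I will build an ergodic $\mu$-stationary measure $\eta$ on $\Grass$ whose top Lyapunov exponent on $\Pro(\Ext)$ is non-positive. Negating Definition~\ref{def: uniformly expan} with $C=1$ gives, for each $n\in\N$, a point $x_n\in M$ and a decomposable unit $v_n\in\ext[x_n]$ such that
\[
\Phi_n(x_n,v_n)\;:=\;\int_{\diff}\log\|D_{x_n}f(v_n)\|\,d\mu^{(n)}(f)\;\le\;1.
\]
The strategy is Krylov--Bogolyubov: average the first $n$ iterates of the $\mu$-random walk on $\Grass$ started at $(x_n,v_n)$ to obtain a $\mu$-stationary measure, then use a telescoping/Furstenberg-type identity to convert the bound on $\Phi_n$ into a bound on the Lyapunov exponent.

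Let $F_f(x,v)=(f(x),D_xf(v))$ act on $\Grass$ (identified with its Pl\"ucker image in $\Pro(\Ext)$), and form
\[
\eta_n\;=\;\frac{1}{n}\sum_{j=0}^{n-1}P^{j}\delta_{(x_n,v_n)},\qquad P^{j}\delta_{z}=\int\delta_{F_f(z)}\,d\mu^{(j)}(f).
\]
Compactness of $\Grass$ together with $\|\mu\ast\eta_n-\eta_n\|_{TV}\le 2/n$ supply a weak-$*$ subsequential limit $\eta$ that is $\mu$-stationary. Setting $\psi(x,v)=\int\log\|D_xf(v)\|\,d\mu(f)$ (on unit decomposable $v$) gives a continuous, bounded function on $\Grass$, where boundedness uses the standing bounded-support hypothesis on $\mu$ (and hence uniform bounds on $\|Df^{\pm1}\|$). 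The key calculation writes $h=g\circ f$ with $f\sim\mu^{(j)}$ and $g\sim\mu$ independent to obtain
\[
\int \psi(F^{j}_\alpha(x,v))\,d\mu^{\N}(\alpha)\;=\;\Phi_{j+1}(x,v)-\Phi_j(x,v),
\]
so summing and using $\Phi_0\equiv 0$ yields $\int\psi\,d\eta_n=\Phi_n(x_n,v_n)/n\le 1/n$. Passing to the weak-$*$ limit along the subsequence gives $\int\psi\,d\eta\le 0$.

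Finally, I would decompose $\eta=\int\eta_e\,d\kappa(e)$ into ergodic $\mu$-stationary components. Furstenberg's formula identifies the top Lyapunov exponent on $\Pro(\Ext)$ of each $\eta_e$ with $\int\psi\,d\eta_e$, so $\int\lambda_1(\eta_e)\,d\kappa(e)\le 0$ forces some component to satisfy $\lambda_1(\eta_e)\le 0$; that component is the desired $\eta$. The main obstacle, in my view, is the bookkeeping in the telescoping identity: one must handle the normalization on decomposable wedges carefully so that $\psi$ is well-defined on $\Grass$, and use the cocycle relation $D_x(g\circ f)=D_{f(x)}g\cdot D_xf$ to collapse the sum to $\Phi_n$. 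Everything else is a standard Krylov--Bogolyubov plus ergodic decomposition argument.
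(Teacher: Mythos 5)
Your proposal is correct and follows essentially the same route as the paper: Krylov--Bogolyubov averaging of the orbit of $(x_n,v_n)$, the telescoping/cocycle identity to show $\int\psi\,d\eta_n=\Phi_n(x_n,v_n)/n\to 0$, and then an ergodic-decomposition plus Furstenberg/Birkhoff argument to extract a component with non-positive exponent. The only cosmetic differences are that the paper writes the telescoping step as a chain of inequalities rather than an exact identity, and invokes the Birkhoff ergodic theorem directly in place of naming Furstenberg's formula.
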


\begin{proof}
Assume that $\mu$ is not uniformly expanding in all dimensions, and fix $\epsilon>0$. Then $\exists k\in \{1,\cdots, d-1\}$ such that for all $n\in \N$, there exists $(x_n, v_n)\in \Pro(\bigwedge^{k}T_{x_n}M))$ such that $v_n$ is decomposable and $$\int \log||D_{x_n}f(v_n)||d\mu^{(n)}(f)<\epsilon$$
We will construct a limit of finite measures supported on the orbits of the points $(x_n,v_n)$ to build a measure $\eta$ that has non-positive exponent. For all $n$, define $$\eta_n:=\frac{1}{n}\sum_{m=0}^{n-1}\mu^{(m)}\ast \delta_{(x_n, v_n)}=\frac{1}{n}\sum_{m=0}^{n-1}\int\delta_{F_f(x_n,v_n)}d\mu^{(m)}(f)$$
Now, we evaluate the following to estimate how far from $\mu$-stationary $\eta_n$ is. $$\mu\ast\eta_n-\eta_n=\frac{1}{n}\sum_{m=0}^{n-1}\left[\int \delta_{F_f(x_n,v_n)}d\mu^{(m+1)}(f)-\int \delta_{F_f(x_n,v_n)}d\mu^{(m)}(f)\right]=\frac{1}{n}\left[\int \delta_{F_f(x_n,v_n)}d\mu^{(n)}(f)-\delta_{(x_n,v_n)}\right]$$
And so we have, $$\lim_{n\to \infty}\mu\ast\eta_n-\eta_n=0$$

Take $\eta$ to be a weak-$\ast$ limit of $\eta_n$. By the above we see that $\eta$ is a $\mu$-stationary measure on $\Grass$, and also induces a $\mu$-stationary measure on $\Pro(\Ext)$ by the pushforward of $\eta$.

Define $\Phi\colon\diff\times \Grass\to \R$ by $\Phi(g, (x,v))= \log ||D_xg(v)||$. Note that for $\alpha=(f_0,f_1,\cdots)$ we can write $$ \log || D_x f_\alpha^n(v)||\le \sum_{m=0}^{n-1}\Phi(f_m, F_{f_\alpha^m}(x,v))||$$
We can separate the variables $f_m$ and $f_\alpha^m$ in integration, and conclude the following for any $(x,v)$:
\begin{align*}
    \int\log ||D_xf(v)||\ d\mu^{(n)}(f) &= \int \log ||D_xf_\alpha^n(v)||\ d\mu^\N(\alpha) \\
    & \le \sum_{m=0}^{n-1}\int \Phi(f_m, F_{f_\alpha^m}(x,v))\ d\mu^\N(\alpha)\\
    &\le \sum_{m=0}^{n-1}\int \Phi(g, F_f(x,v))\ d\mu (g)\ d\mu^{(m)}(f)
    \end{align*}
    
Therefore, for any $n\in \N$, we have 
\begin{align*}
\int \int \log ||D_xg(v)|| d\mu (g)\ d \eta_n (x,v) &= \int \int \Phi(g, (x,v)) d\eta_n(x,v)\ d \mu (g)\\
&\le \frac{1}{n}\sum_{m=0}^{n-1}\int \int \Phi(g,F_f(x_n,v_n)) d\mu^{(m)}(f)\ d\mu (g)\\
&\le \frac{1}{n}\int \log||D_{x_n} f(v_n)|| d\mu^{(n)}(f) \\
& \le \frac{\epsilon}{n}
\end{align*}

By weak-$\ast$ convergence and the continuity of $\log$, we see $$\int\int \log ||D_xg(v)||\ d\mu(g) d\eta (x,v)\le 0$$

Possibly passing to an ergodic component of $\eta$ that preserves the above, we can assume $\eta$ is an ergodic $\mu$-stationary measure on $\Grass$ such that $\int \int \log ||D_xg(v)||\ d\mu (g)\ d\eta (x,v)\le 0$. Define $T\colon\diff^\N\times \Grass$ by $T(\alpha, (x,v))= (\sigma(\alpha), f_0(x,v))$. As $\eta$ is ergodic and $\mu$-stationary, we know $\mu^\N\times \eta$ is an ergodic $T$-invariant measure on $\diff^\N\times \Grass$. Thus, by the Birkhoff Ergodic Theorem, for $\mu^\N\times \eta$-a.e. $(\alpha,(x,v))$, we have
$$\lim_{n\to \infty}\frac{1}{n}\sum_{m=0}^{n-1}\Phi(f_0, f^n_\alpha(x,v)) =\int \int \Phi(f_0, (x,v))\ d\mu^\N(\alpha)\ d\eta (x,v) = \int \int \log ||D_x f(v)||\ d\mu(f)\ d\eta(x,v)$$
By the construction of $\eta$, this equality is non-positive, and so we can conclude 
$$\lim_{n\to\infty}\frac{\log || D_x f_\alpha^n(v)||}{n}\le 0$$

This directly shows that the Lyapunov exponents of $\eta$ are non-positive, and thus the theorem is proved.

\end{proof}
	
	\begin{defn}\label{def: invar}
	Let $\nu$ be a $\mu$-stationary measure on $M$, $\pi\colon\Grass\to M$ be projection on to $M$. We say $E\subseteq \Grass$ is a \textit{$\mu$-invariant $\nu$-measurable algebraic structure in $\Grass$} if:
	
	\begin{enumerate}
	    \item $E_x$ is an algebraic subset\footnote{By algebraic we mean ``defined by polynomial equations."} of $\grass[x]$ for $\nu$-a.e $x\in M$, where $E_x=E\cap \pi^{-1}(x)$ is the fiber of $E$ over $x$, and
	    \item for $\mu$-a.e. $f\in \diff$, $\nu$-a.e. $x\in M$, $E_{f(x)}=D_xfE_x$.
	\end{enumerate}
	
	Similarly, we say $C$ is a \textit{$\mu$-invariant $\nu$-measurable conformal structure on $TM$} if $C\colon M\to \text{CS}(TM)$ is a $\nu$-measureable map and for $\nu$-a.e. $x\in M$, $\mu$-a.e. $f\in \diff$, we have $C_{f(x)}=D_xfC_x$, where $C_x=C(x)$.
	\end{defn}

	\begin{cor}\label{cor: proper bundle}
	If $\mu$ is not uniformly expanding in all dimensions, then there is an ergodic $\mu$-stationary measure $\nu$ on $M$ and a $\mu$-invariant $\nu$-measurable algebraic structure of $\Grass$-- denoted $E$-- with non-positive top Lyapunov exponent in $\Pro(\Ext)$, for some $k\in \{1,\cdots, d-1\}$. Further, if the top Lyapunov exponent of $\nu$ on $\Pro(\Ext)$ is positive, then $E$ is proper. 
	\end{cor}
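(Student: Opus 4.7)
Apply Theorem \ref{thm: Brian's} to obtain some $k \in \{1,\ldots, d-1\}$ and an ergodic $\mu$-stationary measure $\eta$ on $\Grass$ whose top Lyapunov exponent on $\Pro(\Ext)$ is non-positive. Set $\nu := \pi_\ast \eta$ where $\pi\colon \Grass \to M$ is the bundle projection. Since the action on $\Grass$ covers the action on $M$, equivariance of $\pi$ gives that $\nu$ is $\mu$-stationary; ergodicity transfers because any $\mu$-a.s.\ invariant Borel subset of $M$ pulls back under $\pi$ to a $\mu$-a.s.\ invariant subset of $\Grass$, which must have $\eta$-measure in $\{0,1\}$.

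Next I would disintegrate $\eta = \int_M \eta_x \, d\nu(x)$ along the fibers of $\pi$, producing probability measures $\eta_x$ on $\grass[x]$, and define $E_x$ to be the Zariski closure of $\mathrm{supp}(\eta_x)$ inside the projective variety $\grass[x]$. This is by construction a $\nu$-measurable family of algebraic subsets of the fibers. For $\mu$-invariance, I would use that $\mu$-stationarity of $\eta$ together with uniqueness of the disintegration over $\nu$ forces, for $\mu$-a.e.\ $f$ and $\nu$-a.e.\ $x$, the pushforward $(D_xf)_\ast \eta_x$ to be absolutely continuous with respect to $\eta_{f(x)}$; in particular $D_xf(\mathrm{supp}(\eta_x)) \subseteq \mathrm{supp}(\eta_{f(x)})$, and since $D_xf$ restricts to an algebraic isomorphism $\grass[x] \to \grass[f(x)]$, passing to Zariski closures gives $D_xf(E_x) \subseteq E_{f(x)}$. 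The main obstacle I anticipate is upgrading this one-sided containment to the equality required by Definition \ref{def: invar}, since $\mathrm{supp}(\mu)$ need not be symmetric and we have no direct $\mu^{-1}$-stationarity to invoke. The plan is to note that by ergodicity of $\nu$ both $\dim E_x$ and the number of top-dimensional irreducible components of $E_x$ are $\nu$-a.e.\ constant, so that an inclusion $D_xf(E_x) \subseteq E_{f(x)}$ between algebraic sets of equal top dimension and equal top-component count must be an equality.

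Finally, for the properness assertion, suppose $\lambda_1 := \lambda_1(\nu, k) > 0$. Let $W(x) \subsetneq \ext[x]$ denote the Oseledets subspace of vectors with forward Lyapunov exponent strictly less than $\lambda_1$; this is a proper linear subspace (strict since $\lambda_1$ is attained on a nonempty Oseledets stratum). A Birkhoff ergodic theorem argument as in the proof of Theorem \ref{thm: Brian's} shows that $\eta$-a.e.\ $(x,v)$ has forward Lyapunov exponent at most zero, hence strictly less than $\lambda_1$, forcing $v \in \Pro(W(x))$. Therefore $\mathrm{supp}(\eta_x) \subseteq \Pro(W(x)) \cap \grass[x]$ for $\nu$-a.e.\ $x$, and taking Zariski closures gives $E_x \subseteq \Pro(W(x)) \cap \grass[x]$. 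This is a proper algebraic subset of $\grass[x]$ because decomposable vectors linearly span $\ext[x]$, so $\grass[x]$ cannot be contained in the projectivization of any proper linear subspace of $\ext[x]$. Hence $E$ is proper, completing the plan.
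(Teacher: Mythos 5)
Your route is, at its core, the paper's own: invoke Theorem \ref{thm: Brian's} to get $\eta$, set $\nu=\pi_\ast\eta$, disintegrate $\eta$ over $\nu$, build $E$ fiberwise out of $\mathrm{supp}(\eta_x)$, and deduce properness from the fact that decomposable vectors span $\ext[x]$, so $\grass[x]$ cannot sit inside the projectivization of a proper subspace. The substantive deviation is your choice of $E_x$: you take the Zariski closure of $\mathrm{supp}(\eta_x)$, whereas the paper takes $E_x=\Pro\bigl(\mathrm{span}(\mathrm{supp}(\eta_x))\bigr)\cap\grass[x]$. Your choice is legitimate (and yields a smaller, hence more obviously proper, structure), but it is exactly what creates the one genuine gap in your argument: the upgrade of $D_xf(E_x)\subseteq E_{f(x)}$ to the equality required by Definition \ref{def: invar}. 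Almost-everywhere constancy of $\dim E_x$ together with constancy of the number of top-dimensional irreducible components does not exclude a strict inclusion: $E_{f(x)}$ could be $D_xf(E_x)$ together with a nonempty component of strictly smaller dimension, which changes neither invariant. To repair this along your lines you need an a.e.-constant quantity that strictly increases under strict inclusion of algebraic sets and is preserved by the projective-linear isomorphisms $D_xf$ --- for instance the full list of dimensions and degrees of all irreducible components, or the Hilbert function --- and then apply ergodicity. The paper's choice avoids the issue entirely: the spans $V'_x=\mathrm{span}(\mathrm{supp}(\eta_x))$ satisfy $D_xfV'_x\subseteq V'_{f(x)}$, their dimension is a.e.\ constant by ergodicity, and nested subspaces of equal dimension coincide.

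Two smaller remarks. First, the corollary also asserts that $E$ has non-positive top Lyapunov exponent in $\Pro(\Ext)$; you never address this. With the span-based $E_x$ it is immediate, since the invariant subbundle $V'$ carries the restriction of $A_k$ and $\eta$ (with non-positive exponents) lives on it; with your Zariski closure you should at least note that $\mathrm{span}(E_x)=\mathrm{span}(\mathrm{supp}(\eta_x))$, so the same restriction argument applies. Second, your properness argument via the Oseledets subspace $W(x)$ is correct and is essentially the paper's argument in different clothing: $\eta$-a.e.\ fiber of the support lies in a proper subspace of $\ext[x]$, and decomposables span, so $E_x\neq\grass[x]$.
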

	\begin{proof}
	From Theorem \ref{thm: Brian's}, we know there is a $\mu$-stationary measure $\eta$ on $\Grass$ that has non-positive top Lyapunov exponent. Let $\nu:=\pi_\ast \eta$. Then $\nu$ is an ergodic $\mu$-stationary measure on $M$ as $\pi$ is equivariant and $\eta$ is is ergodic and $\mu$-stationary. 
	Let $\eta_x$ be the disintegration of $\eta$ along fibers over $x\in M$, and define $V_x:=\text{supp}(\eta_x)$, $V:=\cup_{x\in M}V_x$, and $V':=\cup_{x\in M}\text{span}(V_x)$. Note that as $V$ is $f$-invariant for $\mu$-a.e. $f\in \diff$, we see that $V\subseteq \Grass$ defines a $\mu$-a.e. $f$-invariant closed subset of $\Grass$. $V'$ forms a similarly invariant subbundle of $\Ext$. As $V'$ is an invariant subbundle of the $A_k$, there is a well-defined restriction to $V'$ with non-positive Lyapunov exponents with respect to $\nu$.
	
	Let $$E:=V'\cap \Grass$$
	
	As the fibers of $V'$ are subspaces, they are algebraic. Thus $E$ is a $\mu$-invariant $\nu$-measurable algebraic structure in $\Grass$ with non-positive Lyapunov exponents. 
	
	Finally, if $\nu$ has positive top Lyapunov exponent, then $V$ is a proper subset of $\Grass$, and so $V'$ must be a proper subbundle of $\Ext$, as decomposable elements span $\Ext$. If $E=\Grass$, then $\Grass\subseteq V'$, and so $E$ must be a proper subset of $\Grass$.  
	\end{proof}

	\section{Reduction to Studying Invariance}
	In this section, we will discuss how to reduce the problem of establishing uniform expansion to the study of invariant structures on $\Grass$. We start with a lemma. 
	
	\begin{lem}\label{lem: pos exp}
	Let $\nu$ be an ergodic $\mu$-stationary measure, $\text{supp}(\mu)\subset\diffvol$. If $\lambda_1(\nu,k)\le 0$ for some $k\in\{1,\cdots, d-1\}$, then $\lambda_i(\nu,1)=0$ for any $i$, i.e., the Lyapunov exponents of $A_1$ on $TM$ with respect to $\nu$ are zero. 
	\end{lem}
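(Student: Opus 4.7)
The plan is to translate the hypothesis $\lambda_1(\nu,k)\le 0$ into a constraint on the exponents of $A_1$ on $TM$, and then to combine volume preservation with the ordering of the exponents to force every exponent on $TM$ to vanish.

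First, I would use volume preservation to conclude that $\sum_{i=1}^d \lambda_i(\nu,1)=0$. Since $\text{supp}(\mu)\subseteq \diffvol$, every $f$ in the support satisfies $|\det D_xf|=1$, and hence $\log|\det D_xf_\alpha^n|\equiv 0$ for every $n$ and $\alpha$. Applying Birkhoff's ergodic theorem to the ergodic skew product $(T,\mu^\N\times\nu)$ on $\diff^\N\times M$, the quantity $n^{-1}\log|\det D_xf_\alpha^n|$ converges almost everywhere to $\sum_{i=1}^d \lambda_i(\nu,1)$, which therefore must vanish.

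Second, I would invoke the standard description of the exponents of the exterior-power cocycle $A_k$ in terms of those of $A_1$: choosing an Oseledets-adapted basis $\{e_1,\dots,e_d\}$ of $T_xM$ for $A_1$, the decomposable wedges $e_{i_1}\wedge\cdots\wedge e_{i_k}$ with $i_1<\cdots<i_k$ form an Oseledets-adapted basis of $\bigwedge^k T_xM$ for $A_k$, with corresponding exponents $\lambda_{i_1}(\nu,1)+\cdots+\lambda_{i_k}(\nu,1)$. The maximum of these sums is attained on the decomposable vector $e_1\wedge\cdots\wedge e_k$, giving
\[
\lambda_1(\nu,k)=\lambda_1(\nu,1)+\cdots+\lambda_k(\nu,1).
\]

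Finally, a short arithmetic argument closes the proof. Writing $\lambda_i=\lambda_i(\nu,1)$, we have $\lambda_1\ge\cdots\ge\lambda_d$, $\sum_i\lambda_i=0$, and the hypothesis reads $\lambda_1+\cdots+\lambda_k\le 0$, which equivalently gives $\lambda_{k+1}+\cdots+\lambda_d\ge 0$. Since $\lambda_k$ is the smallest of the top $k$, it is at most their average, so $\lambda_k\le 0$; since $\lambda_{k+1}$ is the largest of the bottom $d-k$, it is at least their average, so $\lambda_{k+1}\ge 0$. Combined with the monotonicity $\lambda_k\ge\lambda_{k+1}$, this forces $\lambda_k=\lambda_{k+1}=0$, after which the ordering and the two sum inequalities collapse every remaining exponent to zero.

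The only step requiring real care is the identification in the second paragraph: one must confirm that the top Lyapunov exponent of $A_k$ genuinely equals the sum of the top $k$ exponents of $A_1$ and, importantly, that it is attained on a decomposable vector, so that the Oseledets-based definition of $\lambda_1(\nu,k)$ used here is compatible with the decomposable-vector formulation of uniform expansion in Definition \ref{def: uniformly expan}. Everything else is a routine application of Birkhoff together with elementary inequalities.
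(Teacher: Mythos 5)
Your proof is correct and uses exactly the same ingredients as the paper's: volume preservation forces $\sum_i\lambda_i(\nu,1)=0$, the top exponent of $A_k$ equals the sum of the top $k$ exponents of $A_1$, and an elementary averaging argument for decreasing sequences finishes the job. The only difference is organizational — you argue directly from $\lambda_1+\cdots+\lambda_k\le 0$ via $\lambda_k\le 0\le\lambda_{k+1}$, whereas the paper passes through the contrapositive ($\lambda_1(\nu,1)>0$ implies $\lambda_1(\nu,k)>0$ for all $k$) — which is not a substantive distinction.
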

	
	\begin{proof}
	It suffices to prove the stronger statement that $ \lambda_1(\nu,1)\le 0$ if and only if $\lambda_1(\nu,k)\le 0$ for all $k\in \{1,\cdots, d-1\}$. As $\mu$ is supported on volume preserving diffeomorphisms, the Lyapunov exponents of $A_1$ with respect to $\nu$ on $TM$ must sum to 0. On the exterior product, the top Lyapunov exponent of $A_k$ is given by the sum of the top $k$ Lyapunov exponents of $A_1$ on $TM$. If $\lambda_1(\nu,1)\le 0$, then $\lambda_i(\nu,1)=0$ for all $i\in \{1,\cdots, d\}$, and so $\lambda_1(\nu,k)=0$. To prove the other direction, we show the converse: if $\lambda_1(\nu,1)>0$, then as the Lyapunov exponents are listed in decreasing order $\sum_{i=1}^{d-1}\lambda_i(\nu,1)=-\lambda_{d}(\nu,1)>0$. This means $\lambda_1(\nu,k)>0$ for any $k\in\{1,\cdots, d-1\}$, and so all exterior powers will have positive top exponent.
	\end{proof}
	
	 We will use a classical invariance principle of Ledrappier \cite{Ledrappier1986}, extended by Avila-Viana \cite{AvilaViana2010}, in the following theorem.

\begin{thm}\label{thm: led}
If $\mu$ is not uniformly expanding in all dimensions, then there is an ergodic $\mu$-stationary measure $\nu$ on $M$ such that one of the following is true:
\begin{enumerate}
\item for some $k\in\{1,\cdots, d-1\}$, there is a proper $\mu$-a.e. invariant $\nu$-measurable algebraic structure of $\Grass$, or
\item there is a $\mu$-a.e. invariant $\nu$-measurable conformal structure on $TM$. 
\end{enumerate}
\end{thm}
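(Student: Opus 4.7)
The plan is to combine the algebraic structure produced by Corollary \ref{cor: proper bundle} with a dichotomy on the top Lyapunov exponent: if the exponent is strictly positive, the algebraic structure from Corollary \ref{cor: proper bundle} is automatically proper, giving conclusion (1); if it is non-positive, Lemma \ref{lem: pos exp} forces all Lyapunov exponents on $TM$ to vanish, and this places us in the setting of the Ledrappier--Avila--Viana invariance principle, which supplies the invariant conformal structure in conclusion (2).

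More concretely, I would first invoke Corollary \ref{cor: proper bundle} to extract an ergodic $\mu$-stationary measure $\nu$ on $M$, a dimension $k\in\{1,\dots,d-1\}$, and a $\mu$-invariant $\nu$-measurable algebraic structure $E\subseteq \Grass$ whose top Lyapunov exponent in $\Pro(\Ext)$ is non-positive. If the top Lyapunov exponent $\lambda_1(\nu,k)$ of $A_k$ on $\Pro(\Ext)$ is strictly positive, then the second assertion of Corollary \ref{cor: proper bundle} yields that $E$ is proper, which is conclusion (1). Otherwise $\lambda_1(\nu,k)\le 0$, and Lemma \ref{lem: pos exp} says that every Lyapunov exponent of $A_1$ on $TM$ with respect to $\nu$ equals zero. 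I would then apply the Avila--Viana invariance principle \cite{AvilaViana2010} (extending Ledrappier \cite{Ledrappier1986}) to $A_1$: the coincidence of all extremal Lyapunov exponents forces the cocycle to be measurably conjugate to a conformal one, producing a $\nu$-measurable section $C\colon M\to \Conf$ with $D_xf\,C_x=C_{f(x)}$ for $\mu$-a.e.\ $f$ and $\nu$-a.e.\ $x$, which is conclusion (2).

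The main obstacle is converting the conclusion of the invariance principle, which is usually phrased as invariance of disintegrations under stable/unstable holonomies for the skew product $T\colon\diff^\N\times M\to \diff^\N\times M$, into an honest $\nu$-measurable, $\mu$-equivariant section of $\Conf$. A workable route is to build a stationary measure on $\Conf$ by averaging pushforwards of the ambient Riemannian metric along random orbits, extract a weak-$\ast$ accumulation measure, and then use the zero-exponent rigidity from the invariance principle to show this measure disintegrates as a delta-graph over $M$; the graph is then the desired conformal structure. Verifying that this construction actually produces a well-defined conformal class at $\nu$-a.e.\ point, rather than merely a family of quadratic forms defined up to a holonomy ambiguity, is the technical heart of the argument and where the full strength of the extended invariance principle is needed.
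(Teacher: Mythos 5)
Your overall skeleton matches the paper's: invoke Corollary \ref{cor: proper bundle}, split on the sign of $\lambda_1(\nu,k)$, and in the positive case conclude properness of $E$. The gap is in the zero-exponent branch. You assert that once all Lyapunov exponents of $A_1$ on $TM$ vanish, ``the coincidence of all extremal Lyapunov exponents forces the cocycle to be measurably conjugate to a conformal one.'' This is false, and it is precisely why the theorem's conclusion is a dichotomy rather than just item (2). A cocycle with all exponents zero need not be cohomologous to a conformal one; the standard obstruction is a measurable invariant subbundle or a finite union of proper subspaces (think of a cocycle preserving a measurable splitting on whose factors it acts with zero exponents but which is not conformal on the whole fiber). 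What zero exponents plus the Ledrappier/Avila--Viana invariance principle actually buy you is that the disintegrations of any invariant measure on the relevant fiber bundle over the two-sided shift are independent of the itinerary $\alpha$; you must then still run a case analysis on the algebraic nature of the cocycle.

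The paper does this in Lemma \ref{lem: zero lyap}: after passing to the natural extension and showing $\rho=\mu^\Z\times\nu$, it splits into Case 1, where $\hat{A}_1$ \emph{is} cohomologous to a conformal cocycle and one extracts the invariant conformal structure, and Case 2, where it is not; in Case 2 a version of Furstenberg's Lemma shows that any stationary measure on $T^1M$ projecting to $\nu$ is fiberwise supported on a union of two proper subspaces, and the ($\alpha$-independent) support yields a \emph{proper} $\mu$-invariant $\nu$-measurable algebraic structure --- that is, conclusion (1) again, not (2). Your proposed construction in the final paragraph (averaging pushforwards of the ambient metric and hoping zero-exponent rigidity collapses the limit to a delta-graph in $\Conf$) cannot work unconditionally for the same reason: if the cocycle is not cohomologous to a conformal one, no invariant conformal structure exists and the limit will not be a graph. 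To repair the argument, add the non-conformal alternative explicitly and route it to conclusion (1) via Furstenberg's Lemma.
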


Using this theorem, we will show that all $\mu$-stationary measures have positive top Lyapunov exponent and do not admit a non-random invariant measurable algebraic structure $\Grass$ or a conformal structure on $TM$. It is with this fact that we will prove our measure is uniformly expanding. If $\lambda_1(\nu,k)>0$, Corollary \ref{cor: proper bundle} tells us we are in case one. $\lambda_1(\nu,1)=0$ is dealt with in Lemma \ref{lem: zero lyap}.

	\begin{lem}\label{lem: zero lyap}
	Let $\nu$ be an ergodic $\mu$-stationary measure on $M$. If $\lambda_1(\nu, k)\le 0$ for some $k\in\{1,\cdots, d-1\}$, then there is either a proper algebraic subbundle of $TM$ or a conformal structure on $TM$ that is $\mu$-a.e. invariant and $\nu$-measurable. 
	\end{lem}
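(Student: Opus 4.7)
The plan is to reduce to vanishing Lyapunov exponents on $TM$ using Lemma \ref{lem: pos exp}, promote $\mu$-stationarity to $\mu$-invariance on the projective bundle via the Ledrappier-Avila-Viana invariance principle, and then split on whether the resulting invariant measure is supported on a proper algebraic subvariety fiber-wise.

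First, Lemma \ref{lem: pos exp} converts the hypothesis $\lambda_1(\nu,k)\le 0$ into the stronger statement $\lambda_i(\nu,1)=0$ for all $i$, so the derivative cocycle $A_1$ on $TM$ has all Lyapunov exponents vanishing with respect to $\nu$. Since $\Pro(TM)$ has compact fibers and the $\mu$-action is continuous, a standard weak-$*$ compactness argument produces an ergodic $\mu$-stationary Borel probability measure $\eta$ on $\Pro(TM)$ with $\pi_\ast\eta=\nu$.

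Next, because all Lyapunov exponents of $A_1$ with respect to $\nu$ vanish, the invariance principle of Ledrappier \cite{Ledrappier1986}, in the form extended by Avila-Viana \cite{AvilaViana2010}, guarantees that $\eta$ is not merely $\mu$-stationary but $\mu$-a.e.\ $f$-invariant: writing $\{\eta_x\}$ for the disintegration of $\eta$ along fibers, we have $(D_xf)_\ast\eta_x=\eta_{f(x)}$ for $\mu$-a.e.\ $f$ and $\nu$-a.e.\ $x$. Form the fiber-wise Zariski closure $E_x:=\overline{\mathrm{supp}(\eta_x)}^{\mathrm{Zar}}\subseteq\Pro(T_xM)$; invariance yields $D_xf(E_x)=E_{f(x)}$, so $E=\bigsqcup_x E_x$ defines a $\mu$-a.e.\ invariant, $\nu$-measurable algebraic structure in $\Pro(TM)$.

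The proof then splits on the algebraic hull. If $E_x\subsetneq\Pro(T_xM)$ for $\nu$-a.e.\ $x$, then $E$, viewed as a cone bundle in $TM$, is a proper $\mu$-a.e.\ invariant $\nu$-measurable algebraic subbundle of $TM$, giving conclusion (1). Otherwise, $\eta_x$ is a Borel probability measure on $\Pro(T_xM)$ with Zariski-dense support; by a Furstenberg-type stabilizer argument, its stabilizer inside $\mathrm{PGL}(T_xM)$ must be compact, hence conjugate into the projectivized orthogonal group of some inner product on $T_xM$. A measurable selection then yields a $\nu$-measurable conformal structure $C_x\in\conf[x]$, and the equivariance $(D_xf)_\ast\eta_x=\eta_{f(x)}$ forces $D_xf(C_x)=C_{f(x)}$, giving conclusion (2).

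The hardest step will be the second branch: extracting a $\nu$-measurable, $D_xf$-equivariant conformal structure from the mere Zariski-density of $\mathrm{supp}(\eta_x)$. One has to run the Furstenberg stabilizer argument uniformly across fibers, measurably pick a conformal class realizing the compact stabilizer (e.g.\ via a measurable cross-section for $\mathrm{GL}(T_xM)/\mathrm{CO}(T_xM)$), and verify that measure-theoretic equivariance of $\{\eta_x\}$ genuinely upgrades to pointwise equivariance of $\{C_x\}$ and not merely equivariance modulo null sets.
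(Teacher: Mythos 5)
Your skeleton matches the paper's: reduce to all exponents on $TM$ vanishing via Lemma \ref{lem: pos exp}, upgrade a stationary measure on the projectivized bundle to an invariant one via the Ledrappier--Avila--Viana invariance principle, then run a Furstenberg-type dichotomy. But you organize the dichotomy differently, and that choice is where your write-up has a genuine hole. The paper splits on whether the cocycle $\hat A_1$ is cohomologous to a conformal cocycle: in the conformal case the transfer cocycle $C$ hands you the invariant conformal structure directly (as $C^{-1}_{(\alpha,x)}$ applied to the Euclidean one), and in the non-conformal case the version of Furstenberg's lemma in \cite{ACO1999} says the fiber measures are supported on a union of two proper subspaces, which is the algebraic structure. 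You instead split on Zariski-density of $\mathrm{supp}(\eta_x)$, so in your second branch you must manufacture the conformal structure from compactness of the stabilizer of $\eta_x$. That requires an \emph{equivariant}, measurable Furstenberg map from irreducible measures on $\Pro(T_xM)$ to conformal classes (e.g.\ the unique minimizer of a proper strictly convex functional on $\mathrm{SL}_d(\mathbb{R})/\mathrm{SO}_d(\mathbb{R})$ attached to $\eta_x$); a bare measurable cross-section for $\mathrm{GL}/\mathrm{CO}$ will not give equivariance under $D_xf$. You correctly flag this as the hardest step but do not carry it out, so as written the second branch is incomplete. The paper's dichotomy is engineered precisely to avoid this construction.

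The other place you defer real work is the sentence asserting that the invariance principle ``guarantees that $\eta$ is $\mu$-a.e.\ invariant.'' Here the base of the linear cocycle is the skew product $(\diff^\N\times M,\, T,\, \mu^\N\times\nu)$, not a Bernoulli shift, so Ledrappier's theorem does not apply off the shelf. The paper's proof spends most of its length on exactly this point: pass to the two-sided natural extension, first apply Avila--Viana's Theorem B to the nonlinear bundle with fiber $M$ to show the natural extension of $\mu^\N\times\nu$ is the product $\mu^\Z\times\nu$ (a Hopf argument combining independence of past and future), and only then apply the principle again to the cocycle on $T^1M$ to conclude $(D_xf)_\ast\eta_x=\eta_{f(x)}$. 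Your conclusion is the right one, but this two-step, two-sided argument is the substance of the lemma and should not be black-boxed. Your first branch (proper Zariski closure of the support gives the algebraic structure) is fine and is consistent with how Definition \ref{def: invar} is used downstream.
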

	
	\begin{proof}
	Fix $k\in \{1,\cdots, d-1\}$ such that $\lambda_1(\nu, k)\le 0$. By Lemma \ref{lem: pos exp}, we may assume $k=1$ and all Lyapunov exponents on $TM$ are zero. We are now in the situation to apply the invariance principle, as our extremal Lyapunov exponents agree. To do this, we will first need to extend our dynamical system $T$ from the one sided shift to the two sided shift. Let $\hat{\sigma}\colon\diff^\Z\to \diff^\Z$ be the two sided left shift, i.e., $\hat{\sigma}(\alpha)_i=f_{i+1}$ and $\hat{\sigma}^{-1}(\alpha)_i=f_{i-1}$ for any $\alpha=(\cdots, f_{-1}, f_0, f_1,\cdots)$. We define the map $T$ as before, and its extension $\hat{T}$ by the following:
	
	\begin{align*}
	    T:\diff^\N\times &M\to \diff^\N\times M  & \hat{T}:\diff^\Z\times &M\to \diff^\Z\times M\\
	    T(\alpha, x)&=(\sigma(\alpha), f_0(x)) & \hat
	{T}(\alpha, x)&=(\hat{\sigma}(\alpha), f_0(x))
	\end{align*}

	We will work with the natural extension $\rho$ of the measure $\mu^\N\times \nu$ on $\diff^\N\times M$ to the two sided shift space $\diff^\Z\times M$. Unlike the one sided case, it is not immediate that $\mu^\Z\times \nu$ is invariant under $\hat{T}$ if $\nu$ is $\mu$-stationary.\footnote{In fact, this is generally not true.} Our first step will establish that $\rho=\mu^\Z\times \nu$ when $\lambda_1(\nu)=0$. Below are the fiber bundles we consider, and their corresponding invariant measures. Here, $\pi$ is projection from the first component. 
	$$ \begin{tikzcd}
&\mu^\N\times\nu\arrow[swap]{d}{\pi_\ast} &\diff^\N\times M\arrow[swap]{d}{\pi} & &\diff^\Z\times M\arrow[swap]{d}{\pi} &\rho\arrow[swap]{d}{\pi_\ast} \\%
&\mu^\N &\diff^\N &  &\diff^\Z &\mu^\Z
\end{tikzcd}
$$
As seen in Example 3.13 in \cite{AvilaViana2010}, $\rho$ has a unique characterization in this setting-- $$\rho_\alpha=\lim_{n\to\infty}f_0^{-1}f_{-1}^{-1}\cdots f_{-n+1}^{-1}\nu$$ As before, $\rho_{\alpha}$ is the disintegration of $\rho$ over $\alpha\in \diff^\Z$. In particular, $\rho$ is invariant under $\hat{T}$, and as $\nu$ is $\mu$-stationary, $\rho$ is independent of the future of the random walk. That is, the disintegration of $\rho$ along fibers over $\alpha\in \diff^\Z$ does not depend on $\alpha_+$, where $\alpha_+=(f_0,f_1, \cdots)$ and $\alpha_-=(\cdots, f_{-1})$. In concrete terms, this says that if $\alpha_-'=\alpha_-$ for $\alpha'\in \diffvol^\Z$, then $$\rho_{\alpha}=\rho_{\alpha'}$$ The Lyapunov exponents of $\rho$ and $\mu^\N\times \nu$ agree (also demonstrated in 3.13 in \cite{AvilaViana2010}), and are therefore zero. Further, $\rho$ projects down to $\mu^\Z$ on $\diff^\Z$ as it is the natural extension of $\mu^\N\times \nu$. Thus, we are in a situation to apply Theorem B of Avila-Viana. We recall it here for clarity, stated in our notation and setting.

\begin{thmb*}[\cite{AvilaViana2010}]
Let $(\diff^\Z, \mathcal{B}', \mu^\Z)$ be a probability space. Let $M$ be a Riemannian manifold, $\diff^\Z\times M$ be a measurable fiber bundle with fibers modeled on $M$, $\hat{T}$ be a smooth cocycle over $\hat{\sigma}$, $\mathcal{B}\subset \mathcal{B}'$ be a generating $\sigma$-algebra such that both $\hat{\sigma}$ and $\alpha\to \hat{T}_\alpha$ are $\mathcal{B}$-measurable mod 0. Let $\rho$ be a $\hat{T}$-invariant probability measure that projects down to $\mu^\Z$. If $\lambda_i(\rho)=0$ for all $i$, then any disintegration $\alpha\to \rho_\alpha$ is $\mathcal{B}$-measurable mod 0. 
\end{thmb*}
	
	It now only remains to define a choice of $\mathcal{B}$. Let $\pi_i:\diff^\Z\to \diff$ be projection from the $i$th coordinate of the product. Define 
	$$\mathcal{B}:=\left\langle\bigcap_{j=1}^\ell\pi^{-1}_{i_j}(U_{i_j})\ |\ \ell\in \N, i_1,\cdots, i_\ell\in \N \text{ pairwise distinct}, U_{i_j}\subset \diff \text{ open}\right\rangle$$
	This is the $\sigma$-algebra on $\diff^\Z$ generated by the cylinder sets of the non-negative coordinates of $\diff^\Z$. Under forward and backward iterates of $\hat{\sigma}$, $\mathcal{B}$ generates the product $\sigma$-algebra on $\diff^\Z$. It is clear that both $\hat{\sigma}$ and $\alpha\to f_0$ are $\mathcal{B}$-measurable. Thus, by Theorem B, $\alpha\to \rho_\alpha$ is also measurable. This implies that $\rho$ is invariant under the past of the random walk, i.e., only depends on the cylinder sets of the non-negative coordinates. That is, if $\alpha_+=\alpha'_+$, then $\rho_{\alpha}=\rho_{\alpha'}$. As we previously established that the disintegration of $\rho$ is independent of the future, a Hopf argument will conclude that it must be independent of the choice of $\alpha$ for $\mu^\Z$ almost every $\alpha\in \diff^\Z$, and thus, $\rho=\mu^\Z\times \nu$. The Hopf argument is straightforward in this case-- $\mu^\Z$ is a product measure and so has a local product structure, and $\hat{T}$ admits $s$ and $u$ holonomies that are constant on the local stable and unstable laminations, respectively. See Proposition 4.8 of \cite{AvilaViana2010} for full details.
	
	With this characterization, we may now turn our attention to structures in $TM$. By the argument above, we may assume our base is equipped with $\mu^\Z\times\nu$, and this measure is invariant under $\hat{T}$. We consider the adjusted cocycle $$\hat{A}_1:\diff^\Z\times T^1M\to \diff^\Z\times T^1M \text{,  where  }$$
	$$\hat{A}_1(\alpha, (x, v))= (\hat{\sigma}(\alpha), (f_0(x), D_xf_0v))$$ 
		We now consider two cases, based on the structure of our measure and cocycle. We say a cocycle $B:\diff^\Z\times TM\to \diff^\Z\times TM$ is conformal if $B_{(\alpha,x)}\in \mathcal{C}_d$ for $\mu^\Z\times \nu$-a.e. $(\alpha, x)$. 
		
		Case 1: Assume $\hat{A}_1$ is cohomologous to a conformal cocycle $B$ by a transfer cocycle $C$. Then we can define a measure $\beta$ on $\diff^\Z\times T^1M$ by $\beta_{(\alpha, x)}:=C_{(\alpha, x)}^{-1}m$, where $m$ is Lebesgue measure on $\R^d$, and $\beta = \int \beta_{(\alpha, x)}\ \text{d} \mu^\Z\times \nu$. Note that $\beta$ is invariant under $\hat{A}_1$ by construction, and as we may insist $C$ is $\mu^\Z\times \nu$-measurable, $\beta$ projects to $\mu^\Z\times\nu$ on $\diff^\Z\times M$. An identical argument as above-- using the $\sigma$-algebra generated by $\mathcal{B}\times \mathcal{M}$, where $\mathcal{M}$ is the Borel $\sigma$-algebra on $M$-- shows that the disintegration of $\beta$ is independent of the choice of $\alpha\in \diff^\Z$ (but not necessarily independent of the choice of $x\in M$). Thus, for any $(\alpha, x)\in \diff^\Z\times M$ we have a conformal structure on $TM$ given by $C_{(\alpha, x)}^{-1}$ of the standard euclidean metric that is $\mu$-invariant and $\nu$-measurable.
		
		Case 2: Assume $\hat{A}_1$ is not cohomologous to a conformal cocycle. Let $\gamma$ be any stationary measure of $\mu$ on $T^1M$ that projects to $\nu$ on $M$. By the same argument as above, we see that $\mu^\Z\times\gamma$ is an invariant measure of $\hat{A}_1$ on $\diff^\Z\times T^1M$. The Lyapunov exponents of $\nu$ on $T^1M$ are all zero, and thus we are in a situation to apply Theorem 1 of \cite{Ledrappier1986}, the original linear version of the invariance principle. An identical argument as above shows that the disintegration of $\gamma$ is independent of the choice of $\alpha\in \diff^\Z$.

We may normalize $\hat{A}_1$ so that $$(\alpha, (x,v))\to\left(\hat{\sigma}(\alpha), \left(f_0(x),\frac{D_xf_0v}{|\det(D_xf_0)|^{1/d}}\right)\right)$$ $\hat{A}_1$  and the normalized version share the same invariant measures, and so without loss of generality we may replace $\hat{A}_1$ with its normalized copy. A version of Furstenberg's Lemma in \cite{ACO1999} shows that $\gamma_{(\alpha,x)}$ is supported on the union of two proper subspaces of $T_xM$ for $\mu^\Z \times \nu$-a.e. $(\alpha, x)\in \diff^\Z\times M$. As $\gamma$ is not dependent on $\alpha$, we have that $\text{supp}(\gamma_{f(x)})= D_xf\text{supp}(\gamma_x)$ for $\mu$ a.e. $f\in \diff$ $\nu$ a.e. $x\in M$. This support is proper, and so the structure formed by $\text{supp}(\gamma)$ will be a proper $\mu$-invariant $\nu$-measurable algebraic structure. 
	\end{proof}
	
	\begin{proof}[Proof of Theorem \ref{thm: led}]
	By Corollary \ref{cor: proper bundle} and Lemma \ref{lem: pos exp}, there is an ergodic $\mu$-stationary measure $\nu$ on $M$ that has either $\lambda_1(\nu,k)\le 0$ for all $k\in \{1,\cdots,d-1\}$, or admits a proper $\mu$-invariant $\nu$-measurable algebraic structure of $\Grass$ for some $k\in \{1,\cdots, d-1\}$. In the first case, Lemma \ref{lem: zero lyap} shows that there is either a proper $\mu$-a.e. invariant $\nu$ measurable algebraic structure or a conformal structure on $TM$. Thus the theorem is proved. 
	\end{proof}
	
	With this reduction, we have pivoted from general criteria for uniformly expanding measures, to a specific characterization in terms of invariant structures in the dynamics. It now remains to construct our measure and show that it can have no such invariant objects. Our construction will hinge on picking a specific family of diffeomorphisms of $M$ that are strictly compatible with volume, in the sense that they can only coexist with invariant structures if the $\mu$-stationary measure in question is mutually singular with volume.

	\section{Studying Invariance}

	In the following section we will construct a set of specific maps $g_x^a\in \diff$, for any $x\in M$, that depend smoothly on $a$. We then demonstrate several key properties of $g_x^a$, and use the compactness of $M$ to restrict to a finite set of base points $x_1,\cdots, x_j\in M$. The goal of this section is to create $g_{x_1}^a, \cdots, g_{x_j}^a$ so that we may eliminate the invariant structures identified in the previous section.

	Let $g_x^a\in \diff$ be defined as follows. Fix an $R>1$, and let $B_h(0)$ denote the ball of radius $h$ in $\R^d$ for $h>0$. By Moser's Theorem (see 5.1.27 of \cite{katokhassel}), for any $x\in M$ there is a neighborhood $U_x'$ of $x$ and a $\phi_x\colon U_x'\to B_R(0)$ such that $\phi_x$ is a diffeomorphism, $\phi_x(x)=0$, and $\phi_x$ takes divergence free vector fields to divergence free vector fields. We then select a more restricted open neighborhood of $x$, denoted $U_x$, such that $U_x\subset U_x'$ and $\phi_x(U_x)=B_{1/2}(0)$. For fixed $a$, we will construct a volume preserving map $\psi_1^a$ on $B_R(0)$ that is affine on $B_{1/2}(0)$ and smoothly interpolates to the identity on $\partial B_R(0)$. Finally, we will construct $g_x^a$ by conjugating $\psi_1^a$ on $B_R(0)$ by $\phi_x$-- $g_x^a$ will be affine on $U_x$, and the identity outside of $U_x'$.
	
		\vspace{10pt}
		
	To begin, let $d':=d^d+d-1$. Set $a=(a_1,\cdots,a_d, a_{d+2}, \cdots a_{d^2+d})\in\R^{d}\times \R^{d^2-1}=\R^{d'}$, $b_a:=(a_1,\cdots, a_d)$ and consider 
	
	$$A'_a:=\begin{bmatrix}
	c_a & a_{d+2} & \cdots & a_{2d}\\
	a_{2d+1} & a_{2d+2} & \cdots & a_{3d}\\
	\vdots & & & \vdots\\
	a_{d^2+1} &\cdots & \cdots & a_{d^2+d}
	\end{bmatrix}$$
    
    where $c_a=-\sum_{i=2}^da_{id+i}$. The Lie group $G$ of affine transformations of $\R^d$ is given by $(d+1)\times (d+1)$ matrices of the form $\begin{pmatrix}A & b\\ 0 & 1\end{pmatrix}$, for $A\in \text{SL}_d(\R)$, and its action on $\R^d$ is given by $$\begin{pmatrix}A & b\\ 0 & 1\end{pmatrix}\cdot y =Ay+b.$$

    We will work in the open neighborhood of the identity in $\mathfrak{g}$, the Lie algebra of $G$, where the exponential map has inverse given by log. Note that for small $a$, we may choose $b'_a$ and $A_a$ so that 
  $$\exp \begin{pmatrix} A_a' & b_a' \\ 0 & 0\end{pmatrix}=\begin{pmatrix} A_a & b_a \\ 0 & 1\end{pmatrix}$$
  By the choice of $c_a$ we know $\text{tr}(A'_a)=0$, and so $\det (A_a)=1$. Moving forward, we will denote $\begin{pmatrix} A_a' & b_a' \\ 0 & 0\end{pmatrix}\in \mathfrak{g}$ by $X_a$.

 Take $X_a\in \mathfrak{g}$ for $a\in (-1,1)^{d'}$ small enough, and consider the flow defined on $\R^d$ given by $\phi_t^a(y)=e^{tX_a}\cdot y$. Note that at time $t=1$, $\phi_1^a(y)=A_ay+b_a$, and as $X_a$ is a trace free vector field, we know $\phi_t^a$ is a volume preserving diffeomorphism of $\R^d$. 
Without loss of generality, we choose $R$ to be large so that $A_a(B_1(0))+b_a\subset B_R(0)$ for any $a\in (-1,1)^{d'}$, that is, the affine transformation $\exp(X_a)$ maps $B_1(0)$ inside $B_R(0)$. 

Let $\alpha_a=\iota_{X_a}\omega$. Since $\alpha_a$ is a closed $(d-1)$-form there is a $(d-2)$-form $\chi_a$ such that $d\chi_a=\alpha_a$. Let $f\colon B_{R}(0)\to [0,1]$ be a smooth bump function such that $f(y)=1$ for $y\in B_1(0)$ and $f(y)=0$ on a neighborhood of $\partial B_{R}(0)$. Then $d(f\chi_a)=\iota_{Y_a}\omega$, where $Y_a$ is a trace free vector field that agrees with $X_a$ on $B_1(0)$ and vanishes on a neighborhood of $\partial B_{R}(0)$. Let $\psi_1^a(y)$ be the time one map of the flow of $d(f\chi)$ for small $a$. This map will agree with $\phi_1^a$ on $B_1(0)$, and the identity on $\partial B_{R}(0)$. Further, it is volume preserving for any given $a$ by the above. 
    
    Define $g_x^a(y)=\phi_x^{-1}\circ\psi_1^a\circ \phi_x(y)$ for any $y\in U_x$. By construction, the maps $g_x^a$ are volume preserving diffeomorphisms on $M$ with several key properties that we establish in the following theorem. 
    
    \begin{thm}\label{thm: transitivity}
    Let $x\in M$, $k\in \{1,\cdots, d-1\}$, $a\in (-1,1)^{d'}$. For fixed $y\in U_x$, $v\in \grass$, the map $$a\to (g_x^a(y), D_yg^a_x(v))\in \Grass$$
    is smooth and has surjective derivative at $a=0$.
    \end{thm}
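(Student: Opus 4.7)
The plan is to verify smoothness directly from the construction and then use the chart $\phi_x$ to reduce surjectivity at $a=0$ to a standard linear algebra statement about the action of $\mathfrak{sl}_d$ on the Grassmannian of $\R^d$. Smoothness is immediate: $X_a$ depends linearly on $a$, so the bump-multiplied primitive $d(f\chi_a)$ is a smooth vector field depending smoothly on $a$, and by smooth dependence of ODE flows on parameters its time-one map $\psi_1^a$ is smooth in $a$. Conjugation by the fixed diffeomorphism $\phi_x$ preserves smoothness, and the derivative $D_y g_x^a$ depends smoothly on $a$ as well, so the full map into $\Grass$ is smooth.

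For the surjectivity claim the key observation is that, for $y\in U_x$, we have $\phi_x(y)\in B_{1/2}(0)\subset B_1(0)$, where $\psi_1^a$ coincides with the affine transformation $y'\mapsto A_a y'+b_a$. Writing $y'=\phi_x(y)$ and $W=D_y\phi_x(v)\in \text{Gr}^k(\R^d)$, in the local trivialization of the Grassmannian bundle induced by $\phi_x$ the map reads
$$a \;\longmapsto\; (A_a y'+b_a,\ A_a W)\ \in\ B_R(0)\times \text{Gr}^k(\R^d),$$
with tangent space at $(y',W)$ canonically $\R^d\oplus \text{Hom}(W,\R^d/W)$. Since $A_a=I+A'_a+O(|a|^2)$ and $b_a=b'_a+O(|a|^2)$ from the exponential series, the derivative at $a=0$ in direction $\hat a$ is
$$\hat a\;\longmapsto\; \bigl(A'_{\hat a}\,y'+b'_{\hat a},\ [A'_{\hat a}|_W]\bigr),$$
where $[\,\cdot\,]$ denotes reduction modulo $W$. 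Because $\phi_x$ is a diffeomorphism, surjectivity of this explicit linear map is equivalent to surjectivity of the original derivative into $T_{(y,v)}\Grass$.

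It remains to show the linear map from $\R^{d'}$ to $\R^d\oplus\text{Hom}(W,\R^d/W)$ is surjective. The first $d$ coordinates of $\hat a$ parameterize $b'_{\hat a}\in\R^d$ freely, while the remaining $d^2-1$ coordinates parameterize $A'_{\hat a}\in\mathfrak{sl}_d$ freely (the trace-zero constraint absorbing the omitted coordinate $a_{d+1}$). Varying $b'_{\hat a}$ with $A'_{\hat a}=0$ already surjects onto the first factor $\R^d$, so it suffices to show the restriction map $\mathfrak{sl}_d\to \text{Hom}(W,\R^d/W)$, $B\mapsto [B|_W]$, is surjective. The corresponding map from $\mathfrak{gl}_d$ is obviously surjective (extend any basis of $W$ to a basis of $\R^d$ and realize a prescribed $L$ explicitly), and its kernel contains the scalar line $\R I$ since $I$ preserves $W$ and projects to zero modulo $W$; because $\mathfrak{gl}_d=\mathfrak{sl}_d\oplus \R I$, the restriction to $\mathfrak{sl}_d$ remains surjective. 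There is no substantial obstacle beyond carefully tracking the identifications between $\Grass$ and its chart model; the heart of the matter is the surjectivity of the infinitesimal $\mathfrak{sl}_d$-action on the Grassmannian of $\R^d$, combined with the independent freedom to translate via the $b_a$ coordinates.
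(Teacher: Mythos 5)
Your proposal is correct and follows essentially the same route as the paper: reduce via the chart $\phi_x$ to the affine model on $B_R(0)$, use that the derivative of $\exp$ at $0$ is the identity to compute $D_0$ of the map as $\hat a\mapsto (A'_{\hat a}y'+b'_{\hat a},\,[A'_{\hat a}|_W])$, and check surjectivity of this linear map. Your final step --- identifying $T_W\mathbf{Gr}^k(\R^d)$ with $\mathrm{Hom}(W,\R^d/W)$ and deducing surjectivity of $\mathfrak{sl}_d\to\mathrm{Hom}(W,\R^d/W)$ from $\mathfrak{gl}_d=\mathfrak{sl}_d\oplus\R I$ with $\R I$ in the kernel --- is a more careful justification than the paper's brief assertion that $A'_a$ "acts transitively on subspaces of dimension $k$," but it is the same argument in substance.
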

   
    \begin{proof}
   As the question is local, we may conjugate by $\phi_x$ and assume $M=\R^d$, $U_x=B_{1/2}(0)$, and $g_x^a=\psi_1^a$. 
    
    Fix $y\in B_{1/2}(0)$, $y=(y_1,\cdots, y_d)$. On $B_{1/2}(0)$, $\psi_1^a(y)=\exp(X_a)\cdot y$. As $a\to X_a$ is invertible, we restrict to a neighborhood $\mathfrak{g}(\epsilon)$ of the identity of $\mathfrak{g}$. Thus we reduce to the study of the derivative of $F\colon \mathfrak{g}(\epsilon)\to \R^d\times\mathbf{Gr}^k(\R^d)$ where
    $$F(X_a)= (\exp(X_a)\cdot y, D_y\exp(X_a)\cdot y (v))=(A_ay+b_a, D_y(A_ay)(v))$$
    
    Note that the derivative of the exponential map at zero is the identity, and so the derivative of $F$ at zero is $D_{0}F\colon \mathfrak{g}\to \R^d\times T_{v}\mathbf{Gr}^k(\R^d)$,\footnote{Under the identification $T_0\mathfrak{g}(\epsilon)\simeq \mathfrak{g}$.} given by 
    $$D_0F(X_a)=(X_a\cdot (y_1,\cdots, y_d,1), X_a \cdot(1,\cdots, 1,0) (v))=(A'_ay+b_a', A_a'v).$$
    
    It remains to argue why this is surjective, but this is clear-- all but one of the entries of $A_a'$ can be changed independently between $(-1,1)$, and so $A_a'$ acts transitively on subspaces of dimension $k$. Further, translation by $b_a'$ is transitive, and so the entire map is surjective. 
    \end{proof}

These facts give us a local transitive property on the manifold and every grassmanian, as they imply the maps $a\to g_x^a, a\to D_yg_x^av$ are submersions. Finally, note that as $M$ is compact, we may choose a subset $g_{x_1}^a,\cdots, g_{x_j}^a$ such that the corresponding sets $U_{x_1},\cdots, U_{x_j}$ cover $M$. We are now equipped to begin constructing our measure in earnest.

	\section{Constructing the Measure}
	Take $f_0\in U\subset \diffvol$, for $U$ an open set. Let $0<\epsilon<1/2$ be such that $g_{x_i}^a\circ f_0\in U$ for all $a\in (-\epsilon,\epsilon)^{d'}, i\in \{1,\cdots, j\}$. Choose $p_i>0$ so $\sum_{i=0}^jp_i=1$.  
	Define
	$$\mu = Cp_0\delta_{f_0}+ C\sum_{i=1}^jp_i\int_{[-\epsilon,\epsilon]^{d^2+d}}\delta_{g_x^a\circ f_0}da,$$

	where $C>0$ is a normalizing constant that makes $\mu$ a probability measure. Going forward, we will absorb the $C$ into $p_i$. Observe that $\mu$ can be split into an outer integral over $a_{d+2},\cdots, a_{d^2+d}$ and an inner integral over $a_1,\cdots, a_{d}$. This inner integral corresponds to the pure translation portion of $g_{x_i}^a$, and will be used to force additional regularity with respect to volume.

	\begin{thm}\label{thm: abs cont}
	Let $\mu$ be as defined above, and $\nu$ be a $\mu$-stationary probability measure. Then $\nu= \rho+ \nu'$, where $\rho, \nu'$ are positive measures such that $\rho<< \text{vol}_M$.   
	\end{thm}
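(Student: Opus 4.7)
I interpret the substantive content as the claim that $\rho$ is nonzero, since the mere existence of a Lebesgue decomposition $\nu = \rho + \nu'$ with $\rho \ll \text{vol}_M$ is automatic. The plan is to exploit stationarity $\nu = \mu \ast \nu$ together with the smoothing effect of the translation parameters $(a_1,\dots,a_d)$ that are integrated out in the definition of $\mu$. Unpacking stationarity gives
\[
\nu = p_0 (f_0)_* \nu + \sum_{i=1}^j p_i \int_{[-\epsilon,\epsilon]^{d'}} (g_{x_i}^a \circ f_0)_* \nu\, da,
\]
so it suffices to show at least one integral term has a nontrivial absolutely continuous component on a set of positive $\nu$-measure.

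Fix $i$ and set $\Lambda_i := \int_{[-\epsilon,\epsilon]^{d'}} (g_{x_i}^a \circ f_0)_* \nu\, da$. For each $y$ with $f_0(y) \in U_{x_i}$, working in the chart $\phi_{x_i}$, the map $a \mapsto g_{x_i}^a(f_0(y))$ reads $a \mapsto A_a\,\phi_{x_i}(f_0(y)) + b_a$. Differentiating the exponential at the identity of the affine group yields $b_a = (a_1,\dots,a_d) + O(|a|^2)$, so the derivative in $a$ at $a=0$ already surjects onto $\R^d$ through the translation block $b_a$ alone. By smoothness of $(a,y) \mapsto g_{x_i}^a(f_0(y))$ and compactness of $f_0^{-1}(\overline{U_{x_i}})$, one may shrink $\epsilon$ uniformly in $y$ so that this map is a submersion on $[-\epsilon,\epsilon]^{d'}$ for every such $y$. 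The coarea formula then yields a nonnegative, jointly measurable density $\kappa_i(y,z)$, locally bounded in $z$, for the pushforward of Lebesgue measure $da$ under $a \mapsto g_{x_i}^a(f_0(y))$.

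Applying Fubini, for any $\phi \in C^0(U_{x_i})$,
\[
\int \phi\, d\Lambda_i = \int_{f_0^{-1}(U_{x_i})}\int \phi(z)\, \kappa_i(y,z)\, d\text{vol}_M(z)\, d\nu(y) = \int \phi(z)\,\bar{\kappa}_i(z)\, d\text{vol}_M(z),
\]
where $\bar{\kappa}_i(z) := \int_{f_0^{-1}(U_{x_i})} \kappa_i(y,z)\, d\nu(y)$. Thus $\Lambda_i$ is absolutely continuous on $U_{x_i}$ with positive total mass whenever $\nu(f_0^{-1}(U_{x_i})) > 0$. Since the $U_{x_i}$ cover $M$ and $f_0$ is a diffeomorphism, their preimages under $f_0$ also cover $M$, so some $i$ satisfies $\nu(f_0^{-1}(U_{x_i})) > 0$; the corresponding $\Lambda_i$ contributes a nonzero absolutely continuous summand to $\mu \ast \nu = \nu$. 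The main technical point is establishing joint measurability and a uniform submersion radius in $y$, both of which are routine from smoothness of the family $g_{x_i}^a$ and compactness of $M$; the geometric heart of the argument is simply that the translation parameters produce a genuine smoothing, independently of whatever support $\nu$ may have.
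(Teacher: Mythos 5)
Your proposal is correct and follows essentially the same route as the paper: both unpack stationarity $\nu=\mu\ast\nu$ and use the translation block $(a_1,\dots,a_d)$ of $g_{x_i}^a$ to show the pushforward of $da$ is absolutely continuous, then integrate over $y$ and use that the $U_{x_i}$ cover $M$. The paper phrases the smoothing step as an explicit lower bound $\frac{p_k}{c_k}\mathrm{vol}_M(E\cap B_k(z))$ rather than via the coarea formula, but the idea is the same; note also that in the paper's construction $b_a$ is exactly $(a_1,\dots,a_d)$ on the affine region, so no shrinking of $\epsilon$ is needed.
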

	
	\begin{proof}
	Let $\nu$ be a $\mu$-stationary probability measure on $M$. We will show that convolution by $\mu$ produces a non-zero absolutely continuous part of $\nu$.

	For fixed $z\in M$, we know that $f_0(z)\in U_{x_k}$ for at least one $k$. Fix such a $k$. For $y\in U_{x_k}$, we have $g_{x_k}^a(y)=\phi_{x_k}^{-1}\circ\phi_1\circ \phi_{x_k}(y)$. Let $a_{d+2},\cdots, a_{d'}$ be fixed, $A_a$ be as above, and let $b_a=(a_1,\cdots, a_d)$ be variable. By construction, $A_a\phi_{x_k}(f_0(z))\in B_{1/2}(0)$, and so $A_a\phi_{x_k}(f_0(z))+b_a\in B_1(0)$. Where $\phi_{x_i}(f_0(z))$ is well defined, let $$B_i(z):=\phi_{x_i}^{-1}(A_a\phi_{x_i}(f_0(z))+[-\epsilon,\epsilon]^{d}).$$
	We will bound the inner integral from below. As $\omega$ is a Riemannian volume form, there is a smooth positive $g:U_{x_k}'\to \R$ such that $\omega = g dx_1\wedge\cdots dx_d$ on $U_{x_k}'$. Fix $c_k>0$ such that $c_k^{-1}<g<c_k$ on $U_{x_k}$. Note that for any such $A_a$, these constants and the compactness of $M$ imply that $vol_M(B_i(z))$ will be uniformly bounded from below and above by a constant not depending on $z,A_a$. Fix $A_a$. Then for $E\subseteq M$ measurable, we have, 
	
	\begin{align*}
	    \sum_{i=1}^jp_i\left(\int_{[-\epsilon,\epsilon]^{d}}\delta_{g_{x_i}^a\circ f_0(z)}(E)da_1\cdots da_d\right) &= \sum_{i=1}^jp_i\left(\int_{[-\epsilon,\epsilon]^{d}}\mathbf{1}_E(\phi^{-1}_{x_i}\circ\psi_1^a\circ\phi_{x_i}\circ f_0(z))da_1\cdots da_d\right)\\
	     &\ge p_k\int_{[-\epsilon,\epsilon]^{d}}\mathbf{1}_E(\phi^{-1}_{x_k} (A_a\phi_{x_k}(f_0(z))+b_a))db_a\\
	     &\ge p_k\int_{B_k(z)}\frac{\mathbf{1}_E(y)}{g(y)}d\text{vol}_M(y)\\
	     &\ge \frac{p_k}{c_k}\text{vol}_M(E\cap B_k(z)).
	\end{align*}
	
	We now consider $\mu\ast\nu$. Note that $$\mu=p_0\delta_{f_0}+\int_{[-\epsilon,\epsilon]^{d^2-1}}\sum_{i=1}^jp_i\left(\int_{[-\epsilon,\epsilon]^{d}}\delta_{g_{x_i}^a\circ f_0}da_1\cdots da_d\right)da_{d+2}\cdots da_{d^2+d}.$$ For any $\nu$ we may write $\nu(E)=\int_M\delta_z(E)d\nu(z)$. By Fubini's Theorem and the fact $\mu\ast\nu=\nu$, we have, $$\int_M \delta_z(E) d\nu(z)=\nu(E)=\mu\ast \nu (E)=\int_U\int_M\delta_{h(z)}(E)d\nu(z)d\mu(h)=\int_M\int_U\delta_{h(z)}(E)d\mu(h)d\nu(z).$$
Since this holds for any $E$ measurable, it follows that for any $E$ we have $\nu_z(E)=\int_U \delta_{h(z)}(E)d\mu(h)$, where $\nu=\int\nu_z\ d\nu(z)$. It then suffices to show $\nu_z$ has an absolutely continuous part for any given $z$. But \begin{align*}
    \nu_z(E) &= \int_U\delta_{h(z)}(E)d\mu(h)\\
    &= p_0\delta_{f_0(z)}(E)+ \sum_{i=1}^jp_i\int_{[-\epsilon,\epsilon]^{d'}}\delta_{g_x^a\circ f_0(z)}(E)da \\
    &\ge \int_{[-\epsilon,\epsilon]^{d^2-1}}\sum_{i=1}^j p_i\int_{B_i(z)}\frac{\mathbf{1}_E(y)}{g(y)}\cdot \mathbf{1}_{U_i}(f_0(z))d\text{vol}_M(y) dA_a\\
    &\ge \int_{[-\epsilon,\epsilon]^{d^2-1}}\max_i\left\{\frac{p_i}{c_i} \text{vol}_M(E\cap B_i(z))\right\}dA_a.
\end{align*}

 Let $\rho_z(E):=\int_{[-\epsilon,\epsilon]^{d^2-1}}\max_i\left\{\frac{p_i}{c_i} \text{vol}_M(E\cap B_i(z))\right\}dA_a$. Then $\rho_z$ is absolutely continuous with respect to volume. If $\text{vol}_M(E)>0$, then there is a positive volume set of $z$ such that $\rho_z(E)$ is bounded from below by a uniform constant not depending on $z$. Thus, we can conclude.
	\end{proof}
	
	\section{Proving Uniform Expansion}

	\begin{thm}\label{thm: no abscont1}
	Let $\nu$ be a $\mu$-stationary probability measure on $M$ such that $\nu=\nu_0+\nu_\perp$, where $\nu_0, \nu_\perp$ are positive measures and   $\nu_0\ll \text{vol}_M$. Then there are no $\nu$-measurable proper algebraic $\mu$-invariant structures of $\Grass$.
	\end{thm}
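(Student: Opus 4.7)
The plan is to assume, toward contradiction, that $E \subseteq \Grass$ is a proper, $\nu$-measurable, $\mu$-invariant algebraic structure for some $k \in \{1, \ldots, d-1\}$, and to derive a contradiction by combining the affine richness of Theorem \ref{thm: transitivity} with the absolute continuity of $\nu_0$. I assume throughout that $E_x \neq \emptyset$ for $\nu$-a.e. $x$, as the algebraic structures arising from Corollary \ref{cor: proper bundle} (the intended use case) always have this property. The high-level idea is that $\mu$-invariance forces $E$ to fill out a Lebesgue-full open subset of $\Grass$ over a neighborhood of a well-chosen base point, and this, together with $\nu_0 \ll \text{vol}_M$, will contradict properness.

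First I would pick the base point. Let $X_0 \subseteq M$ be the $\nu$-full-measure set where $E_x$ is nonempty, algebraic, and proper, and where $E_{f(x)} = D_x f E_x$ holds for $\mu$-a.e. $f$. Select $x_0 \in X_0 \cap \text{supp}(\nu)$ and set $y_0 := f_0(x_0)$; applying the invariance at the $\mu$-atom $f_0$ ensures that $E_{y_0} = D_{x_0} f_0 E_{x_0}$ is nonempty, algebraic, and proper. Choose $v_0 \in E_{y_0}$ and an index $i$ with $y_0 \in U_{x_i}$. Inspecting the proof of Theorem \ref{thm: abs cont} shows $\nu_0 \geq \int_M \rho_z\, d\nu(z)$ where $\rho_z$ has positive absolutely-continuous density on an open neighborhood of $f_0(z)$; since $x_0 \in \text{supp}(\nu)$, this yields $\nu_0(V) > 0$ for every open neighborhood $V$ of $y_0$.

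Next I apply Theorem \ref{thm: transitivity} to the map $\Phi \colon [-\epsilon, \epsilon]^{d'} \to \Grass$ given by $\Phi(a) = (g_{x_i}^a(y_0), D_{y_0} g_{x_i}^a(v_0))$, noting $\Phi(0) = (y_0, v_0)$. Combining the invariance at $f_0$ with that at $g_{x_i}^a \circ f_0$ (valid for a.e. $a$) gives $E_{g_{x_i}^a(y_0)} = D_{y_0} g_{x_i}^a E_{y_0}$ for a.e. $a$, and hence $\Phi(a) \in E$ for a.e. $a$. Theorem \ref{thm: transitivity} says $\Phi$ has surjective derivative at $a = 0$, so it is a submersion on a neighborhood $V_0$ of $0$; then $W := \Phi(V_0)$ is an open subset of $\Grass$ containing $(y_0, v_0)$, and since preimages of positive-Lebesgue-measure sets under a submersion have positive measure, $W \setminus E$ must be $\text{vol}_{\Grass}$-null.

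To conclude, Fubini on $\Grass$ (locally a product of $\text{vol}_M$ and Grassmannian volume) yields, for $\text{vol}_M$-a.e. $y \in \pi(W)$, that $W_y \setminus E_y$ is Lebesgue-null in $\grass[y]$. Since $W_y$ is a nonempty open subset of $\grass[y]$, $E_y$ has positive Lebesgue measure, and being algebraic this forces $E_y = \grass[y]$. Since $\nu_0 \ll \text{vol}_M$, I then have $E_y = \grass[y]$ for $\nu_0$-a.e. $y \in \pi(W)$. But properness of $E$ together with $\nu_0 \leq \nu$ gives $E_y \neq \grass[y]$ for $\nu_0$-a.e. $y$; since $\nu_0(\pi(W)) > 0$ by the first step, these two conclusions contradict. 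The hard part will be precisely this final comparison: the submersion argument produces a $\text{vol}_M$-a.e. statement while properness is a $\nu$-a.e. statement, and without the positivity of $\nu_0$ near $y_0$ provided by Theorem \ref{thm: abs cont} there would be no way to interleave them.
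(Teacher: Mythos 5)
Your proposal is correct, but it takes a genuinely different route from the paper. The paper's proof works fiberwise over a fixed base point: it invokes Lusin's theorem to make $E$ continuous on a compact set $K_\epsilon$ of large $\nu_0$-measure, uses the surjectivity in Theorem \ref{thm: transitivity} to produce a perturbation $g_{x_k}^{a'}$ that \emph{fixes} a point $y\in K_\epsilon$ while moving $E(y)$ off itself, and then contradicts continuity along a sequence $a_n\to a'$ lying in the full-measure set where invariance holds and with $g_{x_k}^{a_n}(y)\in K_\epsilon$. You instead sweep out the total space: the orbit map $a\mapsto (g_{x_i}^a(y_0), D_{y_0}g_{x_i}^a(v_0))$ is a submersion, so invariance for a.e.\ $a$ forces $E$ to be conull in an open subset $W$ of $\Grass$, and then Fubini plus the fact that a proper subvariety of $\grass$ is Lebesgue-null forces $E_y=\grass$ for $\text{vol}_M$-a.e.\ $y\in\pi(W)$, which collides with $\nu$-a.e.\ properness on the positive-$\nu_0$-measure set $\pi(W)$. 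Your version dispenses with Lusin/continuity entirely but leans much harder on algebraicity (it is what upgrades ``positive measure'' to ``all of $\grass$''); the paper's softer argument only needs $E(y)$ to be a proper closed set that the affine perturbations can move, which is why it transfers verbatim to the conformal-structure case in Theorem \ref{thm: no abscont2}, whereas your sweep would need to be adapted there.

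Three points deserve care but are not gaps. First, your standing assumption $E_x\neq\emptyset$ is also implicit in the paper's proof (which takes $h=\dim V_x>0$), and is indeed necessary: the empty structure is trivially proper and invariant. Second, $E$ is only defined and measurable $\nu$-a.e., so ``$W\setminus E$ is $\text{vol}_{\Grass}$-null'' should be read via outer measure: the set of $w\in W$ whose entire $\Phi$-fiber lies in the null set of bad parameters is a measurable null set containing $W\setminus E$, and the Fubini slicing then also shows $E_y$ is defined for $\text{vol}_M$-a.e.\ $y\in\pi(W)$. Third, your localization of $\nu_0$ near $y_0=f_0(x_0)$ reaches into the proof of Theorem \ref{thm: abs cont} and implicitly takes $\nu_0$ to be the maximal absolutely continuous part; since the theorem is only ever applied to the measures produced there (and the conclusion is independent of the chosen decomposition), this is legitimate, but it should be stated as a normalization.
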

	
	\begin{proof}
We will proceed by contradiction, assuming there is a $\nu$-measurable $\mu$-invariant algebraic structure $E$ and showing $\nu_0=0$. Given an algebraic subset $E_x$ of $\Grass$, there is a corresponding subspace $V_x=\langle E_x\rangle$ generated by its image in $\Ext$. As $D_xfE_x=E_{f(x)}$ for $\nu$-a.e. $x\in M$, $\mu$-a.e. $f\in \diff$, Definition \ref{def: invar} implies $h=h(x):=\dim V_x$ is constant $\nu$-almost everywhere. Thus, any $\mu$-invariant $\nu$-measurable algebraic structure $E$ in $\Grass$ can be written as a $\nu$-measurable $\mu$-invariant map $E\colon M\to \textbf{Gr}^h(\Ext)$, for some fixed $h\in \N$. By the construction, $\mu$-invariance implies $E$ is invariant under the action of both $g_{x_i}^a\circ f_0$ and $f_0$ for any $i$, $m$-a.e. $a\in (-\epsilon,\epsilon)^{d'}$. This implies $E$ is invariant under $g_{x_i}^a$ for any $i$, $m$-a.e. $a\in (-\epsilon,\epsilon)^{d'}$.

	Let $\epsilon>0$. By Lusin's Theorem, there is a compact set $K_\epsilon\subset M$ such that $\nu_0(K_\epsilon)\ge (1-\epsilon)\nu_0(M)$ and $E$ is continuous when restricted to $K_\epsilon$. As $\nu_0$ is absolutely continuous with respect to volume, we may assume $K_\epsilon$ has no isolated points. As $M$ is compact, there is some $U_{x_k}$ such that $\nu_0(K_\epsilon\cap U_{x_k})>0$. For any $y\in K_\epsilon\cap U_{x_k}$, we may choose $a'\in (-\epsilon,\epsilon)^{d'}$ such that $g_{x_k}^{a'}(y)=y$ and $D_yg_{x_k}^{a'} E(y)\ne E(y)$. This is due to Theorem \ref{thm: transitivity} and the fact $E(y)$ corresponds to a proper closed subset of $\grass[y]$, so surjectivity on a neighborhood of $\grass$ lets us perturb proper subspaces. 

Let $$L:=\{a\in (-\epsilon,\epsilon)^{d'}\ |\ E(g_{x_k}^{a}(y))=D_yg_{x_k}^{a}E(y)\}$$ 
and define $L':=\{z\in U_{x_k}\ |\ z=g_{x_k}^a(y), a\in L\}$. We know $m(L)=1$, so by Theorem \ref{thm: transitivity}, we have $\text{vol}_M(L'\cap K_\epsilon)=\text{vol}_M(K_\epsilon\cap U_{x_k})$. As $\nu_0$ is absolutely continuous with respect to $vol_M$,  $\nu_0(L')>0$.
 Thus, there is a sequence $a_n\to a'$ such that $a_n\in L$ and $g_{x_k}^{a_n}(y)\in K_\epsilon$. As $E$ is continuous on $K_\epsilon$ and $a\to g_{x_k}^{a}$ is continuous, we have $$D_yg_{x_k}^{a_n}E(y)=E(g_{x_k}^{a_n}(y))\to E(g_{x_k}^{a'}(y))=E(y)$$ but $$D_yg_{x_k}^{a_n}E(y)\to D_yg^{a'}_{x_k}E (y)\ne E(y),$$ a contradiction. Thus, $\nu_0=0$. 
	\end{proof}
	
	\begin{thm}\label{thm: no abscont2}
	Let $\nu$ be a $\mu$-stationary probability measure on $M$ such that $\nu=\nu_0+\nu_\perp$, where $\nu_0, \nu_\perp$ are positive measures and   $\nu_0\ll \text{vol}_M$. Then there are no $\nu$-measurable $\mu$-invariant conformal structures on $TM$.
	\end{thm}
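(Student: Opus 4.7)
The plan is to adapt the proof of Theorem \ref{thm: no abscont1}, replacing the algebraic structure $E$ on $\Grass$ by the conformal structure $C$ on $TM$. Suppose for contradiction there is a $\mu$-invariant $\nu$-measurable conformal structure $C$ with $\nu_0 \neq 0$. By Lusin's theorem applied to $C\colon M\to \Conf$, there exists a compact $K_\epsilon \subset M$ with $\nu_0(K_\epsilon) \geq (1-\epsilon)\nu_0(M)$ on which $C$ restricts continuously; since $\nu_0 \ll \text{vol}_M$ we may assume $K_\epsilon$ has no isolated points. By compactness of $M$, pick $k$ with $\nu_0(K_\epsilon \cap U_{x_k}) > 0$ and a non-isolated point $y \in K_\epsilon \cap U_{x_k}$.

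The main geometric step is to find $a' \in (-\epsilon,\epsilon)^{d'}$ such that $g_{x_k}^{a'}(y) = y$ but $D_y g_{x_k}^{a'} C_y \neq C_y$. In the chart $\phi_{x_k}$, the map $g_{x_k}^{a'}$ becomes an affine transformation $z \mapsto A_{a'} z + b_{a'}$ with $A_{a'} \in \text{SL}_d(\R)$, and the translation parameters $(a_1,\ldots,a_d)$ allow us to set $b_{a'} = (I - A_{a'}) \phi_{x_k}(y)$, forcing $g_{x_k}^{a'}(y) = y$. The remaining $d^2 - 1$ parameters parameterize $A_{a'} \in \text{SL}_d(\R)$; since the stabilizer of a conformal structure in $\text{SL}_d(\R)$ is conjugate to $\text{SO}_d$, of dimension $d(d-1)/2 < d^2 - 1$, we can choose $A_{a'}$ outside this stabilizer arbitrarily close to the identity.

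Having found such $a'$, we conclude as in Theorem \ref{thm: no abscont1}. Let $L := \{a \in (-\epsilon,\epsilon)^{d'} : C(g_{x_k}^a(y)) = D_y g_{x_k}^a C_y\}$; by $\mu$-invariance and Fubini, $m(L) = 1$. Since the map $a \mapsto g_{x_k}^a(y)$ is a submersion at $a=0$ (by the first component of Theorem \ref{thm: transitivity}), $L' := \{g_{x_k}^a(y) : a\in L\}$ has full $\text{vol}_M$-measure in a neighborhood of $y$; hence, as $\nu_0 \ll \text{vol}_M$ and $y \in \text{supp}(\nu_0|_{K_\epsilon})$, we can extract $a_n \to a'$ with $a_n \in L$ and $g_{x_k}^{a_n}(y) \in K_\epsilon$. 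Continuity of $C|_{K_\epsilon}$ and smoothness of $a \mapsto D_y g_{x_k}^a$ then yield
$$D_y g_{x_k}^{a_n} C_y = C(g_{x_k}^{a_n}(y)) \to C(y), \quad \text{while also} \quad D_y g_{x_k}^{a_n} C_y \to D_y g_{x_k}^{a'} C_y \neq C_y,$$
a contradiction, forcing $\nu_0 = 0$.

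The main obstacle is the geometric step in the second paragraph: one must verify that even after imposing the constraint $g_{x_k}^{a'}(y) = y$ (which consumes $d$ of the $d'$ parameters), the remaining $d^2 - 1$ parameters are enough to escape the stabilizer of $C_y$ in $\text{SL}_d(\R)$. The dimension count $d^2 - 1 > d(d-1)/2$ holds for all $d \geq 2$, but verifying that the action $A_{a'} \mapsto A_{a'}\cdot C_y$ is actually transverse to the stabilizer at $a' = 0$ — essentially an analog of Theorem \ref{thm: transitivity} with $\Conf$ in place of $\Grass$ — is the one ingredient requiring care beyond what has already been proved.
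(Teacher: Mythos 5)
Your proof is correct and follows essentially the same route as the paper: Lusin's theorem on $K_\epsilon$, the perturbation $a'$ with $g_{x_k}^{a'}(y)=y$ but $D_yg_{x_k}^{a'}C(y)\neq C(y)$, and the same continuity contradiction along $a_n\to a'$. The transversality issue you flag at the end is not actually needed: since the stabilizer of $C(y)$ in $\text{SL}_d(\R)$ is a proper closed subgroup (a conjugate of $\text{SO}_d$), and the image of $a\mapsto A_a$ covers a neighborhood of the identity independently of the translation part, one non-stabilizing $A_{a'}$ arbitrarily close to the identity suffices, which is exactly how the paper argues.
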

	
	\begin{proof}
	For the sake of contradiction, assume there is a map $C\colon M\to \Conf$ that is $\mu$-invariant and $\nu$-measurable. We will show $\nu_0=0$. Let $\epsilon>0$. By Lusin's Theorem, there is a compact set $K_\epsilon\subset M$ such that $\nu_0(K_\epsilon)\ge (1-\epsilon)\nu_0(M)$ and $C$ is continuous when restricted to $K_\epsilon$. As $\nu_0$ is absolutely continuous with respect to volume, we may assume $K_\epsilon$ has no isolated points. As $M$ is compact, there is some $U_{x_k}$ such that $\nu_0(K_\epsilon\cap U_{x_k})>0$. 
 
 Note that the image of $a\to A_a$ is a neighborhood of the identity in $\text{SL}_d(\R)$, so for some choice of $a$, the corresponding matrix $A_a$ is not conformal. Thus $D_yg^a_{x_i}$ will not preserve a conformal structure on $T_yM$ for any $y\in U_{x_k}$. Arguing as in the proof of Theorem \ref{thm: no abscont1}, we obtain that for any $y\in K_\epsilon$ there is an $a'\in (-\epsilon,\epsilon)^{d'}$ such that $g_{x_k}^{a'}(y)=y$ and $D_yg_{x_k}^{a'} C(y)\ne C(y)$. The remainder of the proof is identical to the proof of \ref{thm: no abscont1}. 
	\end{proof}
	
	With these two theorems we now have all the machinery in place to show that $\mu$ is uniformly expanding. 
	
	\begin{thm}
	The measure $\mu$ is uniformly expanding in all dimensions. 
	\end{thm}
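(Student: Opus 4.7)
The plan is to derive a contradiction by combining the reduction step with the two kinds of obstructions established in the preceding sections. Suppose for contradiction that $\mu$ is not uniformly expanding in all dimensions. Then Theorem \ref{thm: led} produces an ergodic $\mu$-stationary measure $\nu$ on $M$ which either (a) admits a proper $\mu$-invariant $\nu$-measurable algebraic structure of $\Grass$ for some $k \in \{1,\ldots,d-1\}$, or (b) admits a $\mu$-invariant $\nu$-measurable conformal structure on $TM$.

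Next, I would apply Theorem \ref{thm: abs cont} to this very $\nu$. The theorem yields a decomposition $\nu = \rho + \nu'$ with $\rho \ll \text{vol}_M$, and a glance at the proof, which exhibits the explicit lower bound $\rho_z(E) \ge (p_k/c_k)\,\text{vol}_M(E \cap B_k(z))$ on the fiberwise convolution, confirms that $\rho$ is in fact \emph{nonzero}. Thus $\nu$ has a nontrivial absolutely continuous part, placing it squarely in the hypothesis of both Theorem \ref{thm: no abscont1} and Theorem \ref{thm: no abscont2}.

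Finally, I would dispose of the two cases separately. In case (a), Theorem \ref{thm: no abscont1} forbids any $\nu$-measurable proper algebraic $\mu$-invariant structure of $\Grass$, contradicting the alternative produced by Theorem \ref{thm: led}. In case (b), Theorem \ref{thm: no abscont2} likewise forbids any $\nu$-measurable $\mu$-invariant conformal structure on $TM$, again contradicting Theorem \ref{thm: led}. Either way the hypothesis that $\mu$ fails to be uniformly expanding is untenable, and the theorem follows.

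There is essentially no obstacle at this stage; all of the substantive work has been carried out in Sections 3--5, and the present theorem is simply the assembly of those three components. The only point warranting care is verifying that the ``positive measures'' clause of Theorem \ref{thm: abs cont} really delivers a \emph{nontrivial} absolutely continuous part, for otherwise Theorems \ref{thm: no abscont1} and \ref{thm: no abscont2} would give a vacuous conclusion; this nontriviality is immediate from the explicit estimate in the proof of Theorem \ref{thm: abs cont}.
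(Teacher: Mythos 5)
Your proposal is correct and matches the paper's own proof essentially verbatim: contradiction via Theorem \ref{thm: led}, absolute continuity from Theorem \ref{thm: abs cont}, and exclusion of both invariant structures by Theorems \ref{thm: no abscont1} and \ref{thm: no abscont2}. Your added remark that one must check the absolutely continuous part is genuinely nonzero is a worthwhile point of care, but it does not alter the argument.
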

	
	\begin{proof}
	Assume for the sake of contradiction that $\mu$ is not uniformly expanding in some dimension $k$. By Theorem \ref{thm: led}, there is a $\mu$-stationary measure $\nu$ such that there is either a $\mu$-invariant $\nu$-measurable algebraic structure on $\Grass$ or a $\mu$-invariant $\nu$-measurable conformal structure on $TM$. By Theorem \ref{thm: abs cont}, $\nu$ must have an absolutely continuous part with respect to volume. But by Theorems \ref{thm: no abscont1} and \ref{thm: no abscont2}, absolute continuity prevents the existence of invariant structures. Thus, the theorem is proved. 
	
	\end{proof}
	Finally, let us show that uniform expansion in all dimensions is an open property-- thus, we can discretize $\mu$ to construct a measure with finite support that is also uniformly expanding. 
	\begin{lem}\label{lem: finite support}
	Let $\mu_0$ be a probability measure on $\diff$ that is uniformly expanding in dimension $k\in \{1,\cdots, d-1\}$. Assume that $\text{supp}(\mu_0)$ is contained in some compact set $K$. Then there is an open neighborhood $V$ of $\mu_0$ in the weak-$\ast$ topology of $P(K)$ so that any $\rho\in V$ is also uniformly expanding in dimension $k$. Here, $P(K)$ is the set of probability measures on $K$.
	\end{lem}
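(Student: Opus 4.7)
The plan is to show that the defining integral in Definition~\ref{def: uniformly expan} varies continuously in $\rho$ in a manner \emph{uniform} over the compact parameter space $\Grass$. The argument has three ingredients: continuity and boundedness of the integrand, a uniform lower bound at $\mu_0$, and uniform weak-$\ast$ continuity of the integral over the parametric family of integrands.

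First I would unfold the convolution. Writing $\mu^{(N)}$ as the pushforward of $\mu^N$ under the composition map $(f_0,\dots,f_{N-1}) \mapsto f_{N-1}\circ\cdots\circ f_0$, one has
$$\int_{\diff} \log ||D_x f(v)||\, d\mu^{(N)}(f) = \int_{K^N} \tilde\Phi_{x,v}(\mathbf{f})\, d\mu^N(\mathbf{f}),$$
where $\tilde\Phi_{x,v}(f_0,\dots,f_{N-1}) := \log ||D_x(f_{N-1}\circ\cdots\circ f_0)(v)||$ and $v$ is a unit decomposable representative of the class in $\grass[x]$. Since $K$ is compact in the $C^1$ topology and $M$ is compact, the map $(\mathbf{f},x,v) \mapsto \tilde\Phi_{x,v}(\mathbf{f})$ is continuous and bounded on the compact set $K^N \times \Grass$, hence uniformly continuous. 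In particular the family $\{\tilde\Phi_{x,v}\}_{(x,v) \in \Grass}$ is equicontinuous and uniformly bounded in $C(K^N)$, so by Arzel\`a--Ascoli it is relatively compact in the sup norm. Moreover, the continuous function $(x,v) \mapsto \int \tilde\Phi_{x,v}\, d\mu_0^N$ is strictly larger than $C$ at every point of $\Grass$ by hypothesis, and so attains a minimum $\gamma$ on the compact set $\Grass$ satisfying $\gamma > C$.

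It therefore suffices to find a weak-$\ast$ neighborhood $V$ of $\mu_0$ in $P(K)$ on which $\sup_{(x,v) \in \Grass}\left|\int \tilde\Phi_{x,v}\, d\rho^N - \int \tilde\Phi_{x,v}\, d\mu_0^N\right| < (\gamma-C)/2$ for every $\rho \in V$, since then $\int \tilde\Phi_{x,v}\, d\rho^N > (\gamma+C)/2 > C$ uniformly in $(x,v)$ and the lemma follows with the same $N$. Weak-$\ast$ convergence $\rho_n \to \mu_0$ in $P(K)$ propagates to $\rho_n^N \to \mu_0^N$ in $P(K^N)$ by Stone--Weierstrass applied to tensor products of continuous functions, and the continuity of composition $K^N \to \diff$ then gives $\rho_n^{(N)} \to \mu_0^{(N)}$. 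Given $\varepsilon > 0$, the relative compactness of $\{\tilde\Phi_{x,v}\}$ in $C(K^N)$ yields a finite sup-norm $\varepsilon/3$-net $\{\tilde\Phi_{x_i,v_i}\}_{i=1}^m$; ordinary weak convergence applied to these finitely many continuous test functions produces a common $V$ on which each individual integral difference is less than $\varepsilon/3$, and a triangle-inequality argument then upgrades this to $\varepsilon$-closeness uniformly in $(x,v)$. Applying this with $\varepsilon = (\gamma-C)/2$ delivers the desired neighborhood.

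The main obstacle is precisely this uniformity over the parameter space $\Grass$: pointwise weak-$\ast$ convergence alone gives only pointwise continuity of $\rho \mapsto \int \tilde\Phi_{x,v}\, d\rho^N$ at each fixed $(x,v)$, which is not enough to prevent the lower bound $C$ from being violated near the infimum of $\int \tilde\Phi_{x,v}\, d\rho^N$ in $(x,v)$. The resolution is the Arzel\`a--Ascoli step, which reduces the problem to a finite net of test functions; this reduction is powered by the equicontinuity of the integrand family, which in turn rests on compactness of $K$ in the $C^1$ topology and of $\Grass$.
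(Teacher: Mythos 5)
Your proposal is correct and follows essentially the same route as the paper: both arguments exploit equicontinuity of the family $(x,v)\mapsto \phi_{(x,v)}$ over the compact parameter space $\Grass$ to reduce the uniform estimate to a finite net of test points, control those finitely many integrals on a weak-$\ast$ neighborhood, and propagate by the triangle inequality. Your version is slightly more careful in one respect worth keeping: you explicitly justify, via tensor products and Stone--Weierstrass, that weak-$\ast$ closeness of $\rho$ to $\mu_0$ in $P(K)$ controls $\rho^{(N)}$, a point the paper's definition of the sets $V_i$ (phrased directly in terms of $\rho^{(N)}$) leaves implicit.
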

\begin{proof}
We know $\exists C>0, N>0$ such that for any $(x,v)\in \Grass$, we have $$\int_K\log ||D_xfv||\text{d}\mu_0^{(N)}(f)>C$$ As $K$ is compact, $\phi_{(x,v)}(f):=\log ||D_xfv||$ is continuous and so uniformly bounded for $f\in K$. Define $$\phi\colon K\times \Grass\to \R \text{ by } \phi(f,(v,v)):=\phi_{(x,v)}(f)$$ This is a continuous function in all variables, and the domain is compact, so the family of continuous functions from $\Grass\to \R$ given by $\{\phi\circ f\ |\ f\in K\}$ is uniformly equicontinuous. By compactness of $K$ and $M$, there exists a finite cover $B_1,\cdots, B_\ell$ of $\Grass$ by open balls of fixed radius $\delta>0$ centered at $(x_1,v_1),\cdots (x_\ell,v_\ell)$ in $\Grass$ such that for all $i$, for any $(x,v)\in B_i$: 
\begin{equation*}
    |\phi_{(x_i,v_i)}(f)-\phi_{(x,v)}(f)|<\frac{C}{4} \text{ for all }f\in K,  \tag{$\star$} \label{equicont}
\end{equation*}
 and 
\begin{equation*}
    \left| \int \phi_{(x_i,v_i)}(f)d\rho^{(N)}-\int \phi_{(x,v)}(f)d\rho^{(N)} \right|<\frac{C}{4}  \text{ for all }\rho\in P(\diff). \tag{$\star\star$} \label{equicont2}
\end{equation*} 

We now define the sets $$V_i:=\left\{\rho^{(N)}\in P(K)\ |\ \left|\int_K\phi_{(x_i,v_i)}(f)\rho^{(N)}(f)- C\right| <\frac{C}{4}\right\}$$
These are open non-empty sets in the weak-$\ast$ topology by definition, and comprise of all measures that meet the criteria for uniform expansion on the point $(x_i,v_i)$. By \eqref{equicont} and \eqref{equicont2}, for any $(x,v)\in B_i$, $\rho^{(N)}$ is uniformly expanding at the point $(x,v)$ by constant $\frac{C}{2}$. Thus, $\rho\in V:=\cap_{i=1}^\ell V_i$ is uniformly expanding at all points in $M$. As $V$ is the finite intersection of non-empty open sets and $\mu_0\in V_i$ for all $i$, it too is a non-empty open set, and the claim is shown.

\end{proof}
Finally, we may prove the main theorem of the paper:

\begin{proof}[Proof of Theorem \ref{thm: big boi}]
Let $\mu$ be the measure constructed in Section 5. Consider a sequence $\mu_n$ of finitely supported measures such that $\text{supp}(\mu_n)\subset \text{supp}(\mu)$ for any $n$, and the weak star limit of $\mu_n$ is $\mu$. Note that by construction, the support of $\mu$ is contained (up to scaling) in the compact set $$\{g^a_{x_i}\circ f_0\ |\ a\in [-\epsilon,\epsilon]^{d'}, 1\le i\le d-1\}$$ By applying Theorem \ref{lem: finite support} for each dimension $k$ in $1,\cdots, d-1$, we see there is an open neighborhood $V$ of $\mu$ such that every measure in $V$ is uniformly expanding in all dimensions. As $\mu_n\to \mu$, the sequence must eventually enter the set $V$, and thus we may conclude. 
\end{proof}

\printbibliography

\end{document}